\let\pa\partial  
\let\na\nabla  
\let\eps\varepsilon  
\newcommand{\R}{{\mathbb R}} 
\newcommand{\diver}{\operatorname{div}}
\newtheorem{theorem}{Theorem}   
\newtheorem{lemma}[theorem]{Lemma}   
\newtheorem{remark}[theorem]{Remark}   
\newtheorem{corollary}[theorem]{Corollary}
\begin{document}  

\title[Cross-diffusion systems from population dynamics]{Qualitative behavior 
of solutions to cross-diffusion systems from population dynamics}

\author{Ansgar J\"ungel}
\address{Institute for Analysis and Scientific Computing, Vienna University of  
	Technology, Wiedner Hauptstra\ss e 8--10, 1040 Wien, Austria}
\email{juengel@tuwien.ac.at} 

\author{Nicola Zamponi}
\address{Institute for Analysis and Scientific Computing, Vienna University of  
	Technology, Wiedner Hauptstra\ss e 8--10, 1040 Wien, Austria}
\email{nicola.zamponi@tuwien.ac.at}

\date{\today}

\thanks{The authors acknowledge partial support from   
the Austrian Science Fund (FWF), grants P22108, P24304, and W1245} 

\begin{abstract}
A general class of cross-diffusion systems for two population species 
in a bounded domain with no-flux boundary conditions and Lotka-Volterra-type
source terms is analyzed. 
Although the diffusion coefficients are assumed to depend linearly on the 
population densities, the equations are strongly coupled.
Generally, the diffusion matrix is neither symmetric
nor positive definite. Three main results are proved: the existence of
global uniformly bounded weak solutions, their convergence to the constant steady
state in the weak competition case, and the uniqueness of weak solutions.
The results hold under appropriate conditions on the diffusion parameters
which are made explicit and which contain simplified Shigesada-Kawasaki-Teramoto
population models as a special case. The proofs are based on entropy methods,
which rely on convexity properties of suitable Lyapunov functionals.
\end{abstract}

\keywords{Strongly coupled parabolic systems, population dynamics,
boundedness of weak solutions, large-time behavior of solutions, uniqueness of
weak solutions.}  
 
\subjclass[2000]{35K51, 35Q92, 35B09, 92D25.}  

\maketitle


\section{Introduction}\label{sec.intro}

Many multi-species systems in biology, chemistry, and physics can be described
by reaction-diffusion systems with cross-diffusion effects. The analysis of such
problems is challenging since generally neither maximum principles nor
regularity theory can be applied. Moreover, many systems have diffusion 
matrices that are neither symmetric nor positive definite such that even the 
local-in-time existence of solutions is a nontrivial task. In this paper,
we apply and extend the boundedness-by-entropy method of \cite{Jue15} to a class of
cross-diffusion systems for two species, which are motivated from population 
dynamics. Compared to our previous work \cite{Jue15},
we are here interested in the qualitative behavior of weak solutions, namely
their uniform boundedness, positivity, large-time asymptotics, and uniqueness.

\subsection{Setting}

We consider reaction-diffusion systems of the form
\begin{equation}\label{1.eq}
  \pa_t u - \diver(A(u)\na u) = f(u)\quad\mbox{in }\Omega,\ t>0,
\end{equation}
subject to the homogeneous Neumann boundary and initial conditions
\begin{equation}\label{1.bic}
	(A(u)\na u)\cdot\nu=0\quad\mbox{on }\pa\Omega, \quad u(0)=u^0\quad\mbox{in }\Omega,
\end{equation}
where $u=(u_1,u_2)^\top$ represents the vector of the densities of the species, 
$A(u)=(A_{ij}(u))\in\R^{2\times 2}$ is the diffusion matrix,
and the birth-death processes are modeled by the function $f=(f_1,f_2)$.
Furthermore, $\Omega\subset\R^d$ ($d\ge 1$) is a bounded domain
with Lipschitz boundary and $\nu$ is the exterior unit normal vector to
$\pa\Omega$. Our main assumption is that the diffusivities
depend linearly on the densities,
\begin{equation}\label{1.Aij}
  A_{ij}(u) = \alpha_{ij} + \beta_{ij}u_1 + \gamma_{ij}u_2. \quad i,j=1,2,
\end{equation}
where $\alpha_{ij}$, $\beta_{ij}$, $\gamma_{ij}$ are real numbers.

Such models can be formally derived from a master equation for a random walk
on a lattice in the diffusion limit with transition rates which depend
linearly on the species' densities \cite[Appendix B]{Jue15}. 
They can be also deduced as the limit equations of an interacting particle system
modeled by stochastic differential equations with interaction forces
which depend linearly on the corresponding stochastic processes
\cite{GaSe14,Oel89}. 

The most prominent example of \eqref{1.Aij} is probably the 
population model of Shigesada, Kawasaki, and Teramoto \cite{SKT79}
(abbreviated SKT model):
\begin{equation}\label{1.SKT}
  A(u) = \begin{pmatrix}
	a_{10} + 2a_{11}u_1 + a_{12}u_2 & a_{12}u_1 \\
	a_{21}u_2 & a_{20} + a_{21}u_1 + 2a_{22}u_2
	\end{pmatrix},
\end{equation}
where the coefficients $a_{ij}$ are nonnegative, and 
the source terms in \eqref{1.eq} are given by
\begin{equation}\label{1.LV}
  f_i(u) = (b_{i0} - b_{i1}u_1 - b_{i2}u_2)u_i, \quad i=1,2,
\end{equation}
and the coefficients $b_{ij}$ are nonnegative.
The existence of global weak solutions without any restriction on the
diffusivities (except positivity) was achieved in 
\cite{GGJ03} in one space dimension and in \cite{ChJu04,ChJu06} 
in several space dimensions. Global classical solutions for constant $A_{ij}$
were shown to exist in \cite{LeNg15}.
Galiano \cite{Gal12} proved the uniqueness of bounded weak solutions to the 
SKT model with either diagonal diffusion matrix or the regularity
assumption $\na u_i\in L^\infty$. Uniqueness of strong solutions was 
shown by Amann \cite{Ama89} in the triangular case ($a_{21}=0$ in \eqref{1.SKT}).

There are much less results in the literature
concerning $L^\infty$ bounds and large-time asymptotics.
In one space dimension and with coefficients $a_{10}=a_{20}$, 
Shim \cite{Shi02} proved uniform upper bounds. Moreover, if cross-diffusion 
is weaker than self-diffusion (i.e.\ $a_{12}<a_{22}$, 
$a_{21}<a_{11}$), weak solutions are bounded
and H\"older continuous \cite{Le06}. The existence of global bounded solutions
in the triangular case (i.e.\ $a_{21}=0$) was shown in \cite{CLY03}.
In the triangular case, Le \cite{Le02} proved the existence
of a global attractor. With vanishing birth-death terms, it was shown
in \cite{ChJu06} that the solution to the SKT model converges exponentially
fast to the constant steady state. 

It cannot be expected that such results hold for any choice of the
parameters appearing in \eqref{1.Aij} and \eqref{1.LV}. 
For instance, system \eqref{1.eq} with
$$
  A(u) = \begin{pmatrix}
	1 & -u_1 \\ 0 & 1 
	\end{pmatrix}, \quad
	f(u) = \begin{pmatrix} 0 \\ u_1-u_2 \end{pmatrix}
$$
corresponds to the parabolic-parabolic Keller-Segel model which
exhibits the phenomenon of cell aggregation. If the cell density
is sufficiently large initially, finite-time
$L^\infty$ blow-up of solutions in two and three 
space dimensions occurs (see, e.g., \cite{HiPa09}), and bounded weak solutions
cannot be generally expected.

We wish to determine conditions on the parameters in \eqref{1.Aij} for which
the weak solutions to \eqref{1.eq}-\eqref{1.bic} are uniformly bounded, positive,
converge to the steady state, and are unique.
The key idea is to apply and refine entropy methods. 
Here, an entropy is a convex Lyapunov functional which provides 
additional gradient estimates. Special entropies may also allow for uniform
$L^\infty$ bounds, see below. The advantage of these methods is a separation
of the analytical and algebraic properties of the parabolic system.
Often, it is sufficient to analyze the algebraic structure of the
diffusion matrix, which simplifies the proofs, while achieving new results.


\subsection{Main results}

We introduce the triangle 
\begin{equation}\label{1.D}
  D = \{(u_1,u_2)\in\R^2:u_1>0,\ u_2>0,\ u_1+u_2<1\}.
\end{equation}
First, we prove the existence of global
bounded weak solutions to \eqref{1.eq}-\eqref{1.Aij} for diffusion matrices
of the form
$$
  A(u) = \begin{pmatrix}
	\alpha_{11} + \beta_{11}u_1 + \gamma_{11}u_2 & \beta_{12}u_1 \\
	\gamma_{21}u_2 & \alpha_{22} + \beta_{22}u_1 + \gamma_{22}u_2
	\end{pmatrix}.
$$

\begin{theorem}[Bounded weak solutions to \eqref{1.eq}]\label{thm.ex}
Let $u^0=(u_1^0,u_2^0)\in L^1(\Omega;\R^2)$ be such that $u^0(x)\in D$ 
for $x\in\Omega$, let $A(u)$ be given
by \eqref{1.Aij} with coefficients satisfying
\begin{align}
  & \alpha_{12} = \alpha_{21} = \beta_{21} = \gamma_{12} = 0, \label{1.symm1} \\
	& \beta_{22} = \beta_{11}-\gamma_{21}, \quad
	\gamma_{11} = \gamma_{22}-\beta_{12}, \quad 
	\gamma_{21} = \alpha_{22} - \alpha_{11} + \beta_{12}, \label{1.symm2} \\
	& \alpha_{11}>0,\quad \alpha_{22}>0, \quad 
	\beta_{12}<\alpha_{11}+\min\{\beta_{11},\gamma_{22}\}, \quad
	\alpha_{11}+\beta_{11} \ge 0, \quad \alpha_{22}+\gamma_{22} \ge 0, \label{1.cond}
\end{align}
and let $f_i(u)=u_ig_i(u)$, where $g_i(u)$ is continuous in $\overline{D}$
and nonpositive in $\{1-\eps < u_1+u_2 < 1\}$ for some $\eps>0$ $(i=1,2)$.
Then there exists a bounded nonnegative weak solution $u=(u_1,u_2)$ to 
\eqref{1.eq}-\eqref{1.bic} satisfying $u(x,t)\in \overline{D}$ for $x\in\Omega$, $t>0$,
\begin{equation}\label{1.reg}
  u\in L^2_{\rm loc}(0,\infty;H^1(\Omega;\R^2)), \quad
	\pa_t u\in L^2_{\rm loc}(0,\infty;H^1(\Omega;\R^2)'),
\end{equation}
and the initial datum is satisfied in the sense of $L^2$.
\end{theorem}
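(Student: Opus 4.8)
The plan is to use the boundedness-by-entropy method: find a strictly convex entropy density $h:D\to\R$ whose Hessian $h''(u)$ symmetrizes the diffusion matrix in the sense that $h''(u)A(u)$ is positive semidefinite on $\overline D$, and such that $h$ blows up at the boundary of $D$. The algebraic conditions \eqref{1.symm1}--\eqref{1.cond} are precisely what is needed for this. A natural candidate, given that $D$ is the simplex $\{u_1,u_2>0,\ u_1+u_2<1\}$, is the Boltzmann-type entropy
\begin{equation*}
  h(u) = \sum_{i=1}^2 \bigl(u_i(\log u_i-1)+1\bigr) + (1-u_1-u_2)\bigl(\log(1-u_1-u_2)-1\bigr)+1,
\end{equation*}
possibly with weights $\pi_i>0$ in front of the $u_i$-terms chosen to match the off-diagonal structure $A_{12}=\beta_{12}u_1$, $A_{21}=\gamma_{21}u_2$. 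First I would compute $h''(u)$, which is the diagonal-plus-rank-one matrix with entries $1/u_i$ plus $1/(1-u_1-u_2)$, and then verify that $z^\top h''(u)A(u)z\ge 0$ for all $z\in\R^2$; the relations \eqref{1.symm2} should make the quadratic form reduce to something like $c_1|z_1|^2+c_2|z_2|^2$ up to the density-dependent factors, with nonnegativity guaranteed by \eqref{1.cond} (the conditions $\alpha_{11}+\beta_{11}\ge0$, $\alpha_{22}+\gamma_{22}\ge0$, and $\beta_{12}<\alpha_{11}+\min\{\beta_{11},\gamma_{22}\}$ being exactly the sign constraints that survive).

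Once the entropy structure is in place, the existence proof follows the standard approximation scheme. I would introduce the entropy variable $w=h'(u)$, which maps $D$ bijectively onto $\R^2$; this change of variables automatically enforces $u(x,t)\in\overline D$ once we solve for $w$. Then I would discretize in time with step $\tau>0$ (implicit Euler) and add a regularization $\eps(\mathrm{id}-\Delta)$ or $\eps\|w\|_{H^m}$ term (for $m>d/2$) to gain compactness, solving the resulting elliptic system for $w$ by the Lax-Milgram/Leray-Schauder fixed-point argument using the positive semidefiniteness of $h''(u)A(u)$. The a priori estimate comes from testing the time-discrete equation with $w$ itself: this yields the discrete entropy inequality
\begin{equation*}
  \int_\Omega h(u^k)\,dx + \tau\int_\Omega \na u^k:h''(u^k)A(u^k)\na u^k\,dx \le \int_\Omega h(u^{k-1})\,dx + \tau\int_\Omega h'(u^k)\cdot f(u^k)\,dx,
\end{equation*}
and the sign assumption on $g_i$ near $\{u_1+u_2=1\}$ controls the source term $h'(u)\cdot f(u)$ (note $h'_i(u)=\log(u_i/(1-u_1-u_2))$, which is large only where $u_i$ is near $0$ or $u_1+u_2$ near $1$; in the latter regime $g_i\le0$ and $f_i=u_ig_i$ has the right sign). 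Summing over $k$ gives uniform bounds on $h(u)$ in $L^\infty(0,T;L^1)$ and on $\na u$ in $L^2$ weighted by $h''(u)A(u)$, which by the coercivity computed above gives $u\in L^2_{\rm loc}(0,\infty;H^1)$ and hence $\pa_t u\in L^2_{\rm loc}(0,\infty;H^1{}')$ from the equation.

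The limit $\eps\to0$, $\tau\to0$ is then carried out by the Aubin-Lions lemma: the $H^1$ bound on $u$ together with the $H^1{}'$ bound on $\pa_t u$ (or on the discrete time difference) gives strong $L^2$ convergence of $u$, which suffices to pass to the limit in the nonlinear terms $A(u)\na u$ and $f(u)$ since $A$ is polynomial and $u$ is bounded. The boundedness $u(x,t)\in\overline D$ and nonnegativity are preserved in the limit because $\overline D$ is closed and convex; the $L^2$ attainment of the initial datum follows from the continuity in time coming from $\pa_t u\in L^2_{\rm loc}(H^1{}')$. The main obstacle I anticipate is verifying the positive semidefiniteness of $h''(u)A(u)$ uniformly on $\overline D$ --- this is the heart of the matter, because the matrix is neither symmetric nor positive definite on its own, and one must find the correct weighted entropy (the weights $\pi_i$) and then carry out a somewhat delicate quadratic-form estimate in which the relations \eqref{1.symm2} cause cancellations and \eqref{1.cond} pins down the signs; a secondary technical point is handling the source term near the boundary of $D$ carefully, since $h'$ is unbounded there, but the structural assumption $f_i=u_ig_i$ with $g_i\le0$ near $\{u_1+u_2=1\}$ is tailored to exactly this difficulty.
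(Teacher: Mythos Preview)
Your overall approach is correct and matches the paper's: the boundedness-by-entropy method of \cite{Jue15} with the Boltzmann entropy on the simplex, verification of Hypotheses H2 and H3, then the standard implicit-Euler/regularization scheme and Aubin--Lions compactness. The entropy you write down is (up to an additive constant) exactly the one used, and no weights $\pi_i$ are needed.

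The one place where your expectations diverge from what actually happens is the verification of H2. The relations \eqref{1.symm2} do \emph{not} make $z^\top h''(u)A(u)z$ collapse to a diagonal form $c_1 z_1^2+c_2 z_2^2$; they only make $h''(u)A(u)$ symmetric. The off-diagonal terms survive and depend nontrivially on $u$, so a direct quadratic-form estimate does not close. The paper instead proceeds indirectly: it first shows (via a strong maximum principle applied to the diagonal entries $(HA)_{ii}$, viewed as functions on the simplex) that at least one diagonal entry is strictly positive in $D$, and then shows $\det A\ge 0$ on $D$ by checking nonnegativity on $\partial D$ and observing that the Hessian of $\det A$ has a nonpositive eigenvalue, so $\det A$ is concave along the corresponding direction and attains its minimum on the boundary. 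Sylvester's criterion then gives positive semidefiniteness, and a perturbation argument (replacing $A$ by $A-\eps P$ for a suitable $P$ with $HP=\operatorname{diag}(1/u_1,1/u_2)$) upgrades this to the coercivity $z^\top HA\,z\ge \eps(z_1^2/u_1+z_2^2/u_2)$, i.e.\ H2 with $m_i=\tfrac12$. This is the genuinely nontrivial part of the proof, and your sketch underestimates it.
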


Note that the $L^\infty$ bound on $u$ is uniform in time.
We show in the appendix that \eqref{1.symm1} (and two further conditions)
are necessary to apply the entropy method. Thus, in the framework of such
techniques, conditions \eqref{1.symm1} cannot be improved.
The theorem also holds true if $\alpha_{11}=\alpha_{22}=0$ but
$\beta_{11}>0$ and $\gamma_{22}>0$; see Remark \ref{rem}.
The condition $u_1^0+u_2^0<1$ can be satisfied after a suitable scaling
of the positive function $u^0\in L^\infty(\Omega;\R^2)$ and is therefore not a 
restriction. The assumption on $f(u)$ guarantees that
the triangle $D$ is an invariant region under the action of the reaction terms. 
Theorem \ref{thm.ex} generalizes the global existence result in \cite{GaSe14},
where the positive definiteness of $A$ was needed. To the best of our knowledge,
this is the first general existence result for uniformly bounded weak solutions
to cross-diffusion systems with linear diffusivities.

The proof is based on the boundedness-by-entropy
method, first used in \cite{BDPS10} for a ion-transport model and later
extended in \cite{Jue15}.
The key idea is to formulate conditions under which the functional
$$
  \mathcal H[u] = \int_\Omega h(u)dx, \quad\mbox{where }
	h(u) = \sum_{i=1}^3 u_i(\log u_i-1), \ u_3:=1-u_1-u_2,
$$
is an entropy for \eqref{1.eq}. More precisely,
assume that the derivative of the entropy density $h:D\to\R$ is invertible
and the matrix $h''(u)A(u)$ is positive semidefinite, where
$h''(u)$ is the Hessian of $h(u)$.
We introduce the entropy variable $w=h'(u)$. Then \eqref{1.eq} is equivalent to
$$
  \pa_t u - \diver(B(w)\na w) = f(u(w)),
$$
where $B(w)=A(u)h''(u)^{-1}$ and $u(w)=(h')^{-1}(w)$. Now, if $f(u)\cdot w\le 0$,
$$
  \frac{d}{dt}{\mathcal H}[u] \le -\int_\Omega \na w:B(w)\na w dx
	= -\int_\Omega \na u:h''(u)A(u)\na u dx \le 0,
$$
where ``:'' denotes summation over both matrix indices. This shows that 
$\mathcal H[u]$ is a Lyapunov functional for \eqref{1.eq}. There is a second
consequence: Since the triangle $D$ in \eqref{1.D} is bounded, the original
variable $u=(h')^{-1}(w)$ maps into $D$ which is bounded. Therefore, 
$u(x,t)\in D$ and the solutions to \eqref{1.eq} are bounded. This result
holds without the use of a maximum principle.

Theorem \ref{thm.ex} can be applied to the SKT model \eqref{1.SKT}
to determine conditions under which this model possesses bounded weak 
solutions; see Section \ref{sec.skt}.
The novelty is not the global existence
(which has been proven in \cite{ChJu04})
but the uniform boundedness of weak solutions. 
	
The second main result is concerned with the large-time behavior of the
solutions to \eqref{1.eq}. The steady state of \eqref{1.eq}-\eqref{1.bic} 
is defined as the only constant solution $U=(U_1,U_2)$ to \eqref{1.eq}-\eqref{1.bic}.
(There may be also non-constant steady states \cite{LoNi96} but we are interested
only in constant solutions.) The steady state is a solution to the algebraic
equation $f(U)=0$. If $f$ is given by \eqref{1.LV} and $(b_{ij})_{i,j=1,2}$
is positive definite, equation $f(U)=0$ admits the unique solution
\begin{equation}\label{1.U}
  U_1 = \frac{b_{10}b_{22}-b_{20}b_{12}}{b_{11}b_{22}-b_{12}b_{21}}, \quad
	U_2 = \frac{b_{20}b_{11}-b_{10}b_{21}}{b_{11}b_{22}-b_{12}b_{21}}.
\end{equation}

\begin{theorem}[Convergence to the steady state]\label{thm.time}
Let the hypotheses of Theorem \ref{thm.ex} hold and let $f(u)$ be given by
\eqref{1.LV}. Let the matrix $(b_{ij})_{i,j=1,2}$ be positive definite and
assume that
\begin{equation}\label{1.bij}
  b_{10} = b_{12} < b_{11}, \quad b_{20} = b_{21} < b_{22},
\end{equation}
as well as
\begin{equation}\label{1.abc}
\begin{aligned}
  (\alpha_{11}+\beta_{11})(\alpha_{11}+\beta_{11}-\beta_{12})
	 - 4\gamma_{21}^2\frac{U_2}{U_1} &> 0, \\
  (\alpha_{22}+\gamma_{22})(\alpha_{22}+\gamma_{22}-\gamma_{21})
	 - 4\beta_{12}^2\frac{U_1}{U_2} &> 0.
\end{aligned}
\end{equation}
Then the solution to \eqref{1.eq}-\eqref{1.Aij} constructed in Theorem \ref{thm.ex}
satisfies $u_i(x,t)>0$ a.e.\ in $\Omega\times(0,\infty)$,
$u_i-U_i$, $\na\log u_i\in L^2(\Omega\times(0,\infty))$, and 
$$
  u_i(t)\to U_i\quad\mbox{strongly in }L^2(\Omega)\mbox{ as }t\to\infty,
	\quad i=1,2.
$$
\end{theorem}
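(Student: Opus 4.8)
The plan is to use the \emph{relative entropy} of $u$ with respect to the constant steady state $U$. One first notes that, by \eqref{1.bij} and positive definiteness of $(b_{ij})$, the quantity $U_3:=1-U_1-U_2=(b_{11}-b_{12})(b_{22}-b_{21})/(b_{11}b_{22}-b_{12}b_{21})$ is positive, so $U\in D$, and one may define
\begin{equation*}
  \HH[u|U]=\int_\Omega\Big(h(u)-h(U)-h'(U)\cdot(u-U)\Big)dx
  =\int_\Omega\sum_{i=1}^3 u_i\log\frac{u_i}{U_i}\,dx\ge 0,
\end{equation*}
using $\sum_i u_i=\sum_i U_i=1$ pointwise; this is finite at $t=0$ and vanishes only at $u\equiv U$. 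Writing $w=h'(u)$ and $W=h'(U)$, and combining the entropy inequality underlying Theorem~\ref{thm.ex} in the form that \emph{retains} the reaction term, $\frac{d}{dt}\int_\Omega h(u)\,dx\le-\int_\Omega\na u:h''(u)A(u)\na u\,dx+\int_\Omega f(u)\cdot w\,dx$, with the mass balances $\frac{d}{dt}\int_\Omega u_i\,dx=\int_\Omega f_i(u)\,dx$ obtained by integrating \eqref{1.eq} against $1$ and using \eqref{1.bic}, one arrives at
\begin{equation*}
  \frac{d}{dt}\HH[u|U]\le-\int_\Omega\na u:h''(u)A(u)\na u\,dx+\int_\Omega(w-W)\cdot f(u)\,dx=:-I_D(t)+\int_\Omega(w-W)\cdot f(u)\,dx.
\end{equation*}

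Next I would treat the reaction term. Assumption \eqref{1.bij} lets one rewrite, for $i=1,2$, $f_i(u)=(b_{ii}-b_{i,3-i})\,u_i u_3\big(U_i/U_3-u_i/u_3\big)$, where the identity $U_i/U_3=b_{i,3-i}/(b_{ii}-b_{i,3-i})$ is read off from $f(U)=0$. Since $w_i-W_i=\log(u_i/u_3)-\log(U_i/U_3)$, this gives
\begin{equation*}
  (w-W)\cdot f(u)=-\sum_{i=1}^2(b_{ii}-b_{i,3-i})\,u_i u_3\Big(\frac{u_i}{u_3}-\frac{U_i}{U_3}\Big)\Big(\log\frac{u_i}{u_3}-\log\frac{U_i}{U_3}\Big)=:-\Phi(u)\le0,
\end{equation*}
each summand being nonnegative by monotonicity of the logarithm together with $b_{11}>b_{12}$ and $b_{22}>b_{21}$. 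I would then record that $\Phi\ge0$ on $\overline D$, that $\Phi(u)\to+\infty$ as $u_3\to0$, and that near $u=U$ it is comparable to a positive definite quadratic form in $u-U$; hence $\Phi(u)\ge c\,|u-U|^2$ on $\overline D$ for some $c>0$. Setting $I_R(t)=\int_\Omega\Phi(u)\,dx$ we obtain the Lyapunov inequality $\frac{d}{dt}\HH[u|U]\le-I_D(t)-I_R(t)\le0$.

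For the diffusion term, Theorem~\ref{thm.ex} only supplies the positive semidefiniteness of $h''(u)A(u)$, i.e.\ $I_D\ge0$; the role of the extra hypotheses \eqref{1.abc} is to upgrade this to a coercivity estimate. Expanding $\na u:h''(u)A(u)\na u$ as a quadratic form in $(\na\log u_1,\na\log u_2)$, substituting the relations \eqref{1.symm1}--\eqref{1.symm2} and $U_i/U_3=b_{i,3-i}/(b_{ii}-b_{i,3-i})$, and completing squares, one should find that the two inequalities in \eqref{1.abc} are precisely the determinant-type positivity conditions forcing
\begin{equation*}
  I_D(t)\ge c\int_\Omega\big(|\na\log u_1|^2+|\na\log u_2|^2\big)dx
\end{equation*}
for some $c>0$ (using also $u\in\overline D$). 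Identifying the correct quadratic form, carrying out this algebra, and verifying that the resulting sum of squares controls the full gradient of $\log u_i$ — not merely of $\sqrt{u_i}$, which is what the bare entropy structure yields — is the main obstacle, and the one place where hypotheses strictly stronger than those of Theorem~\ref{thm.ex} are genuinely used; as a by-product it gives $\na\log u_i\in L^2_{\rm loc}$, whence $\log u_i$ is finite a.e.\ and $u_i>0$ a.e.

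Finally I would integrate in time. From $\frac{d}{dt}\HH[u|U]\le-(I_D+I_R)$ and $\HH[u(0)|U]<\infty$ one gets $\int_0^\infty(I_D(t)+I_R(t))\,dt<\infty$; combined with $\Phi(u)\ge c|u-U|^2$ and the coercivity of $I_D$, this yields $u_i-U_i\in L^2(\Omega\times(0,\infty))$ and $\na\log u_i\in L^2(\Omega\times(0,\infty))$. For the convergence, $\HH[u(t)|U]$ is nonincreasing and, by the elementary inequality $a\log(a/b)-a+b\ge\tfrac12(a-b)^2$ for $a,b\in(0,1]$ applied termwise, bounded below by $\tfrac12\|u(t)-U\|_{L^2(\Omega;\R^2)}^2$, so it converges to some $L\ge0$. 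Choosing $t_k\to\infty$ with $I_R(t_k)\to0$ and using $\Phi(u)\ge c|u-U|^2$ gives $u(t_k)\to U$ in $L^2(\Omega)$, hence $\HH[u(t_k)|U]\to0$ by dominated convergence, so $L=0$; the monotone decay then yields $\|u(t)-U\|_{L^2(\Omega)}^2\le2\HH[u(t)|U]\to0$. The differential inequality for $\HH[u|U]$ should be established first on the regularized problems used to prove Theorem~\ref{thm.ex}, where $\int_\Omega(w-W)\cdot f(u)\,dx$ is well defined, and then passed to the limit using the good sign $(w-W)\cdot f=-\Phi\le0$ and weak lower semicontinuity, since $w=h'(u)$ need not lie in $L^2(0,T;H^1(\Omega))$ for the weak solution itself.
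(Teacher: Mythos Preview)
Your overall strategy---use a relative entropy to get a Lyapunov inequality, extract $L^2$-in-time control of $u-U$ and of $\nabla\log u_i$, then conclude positivity and convergence---matches the paper's. But two of the key intermediate estimates you state do not hold with the functional $\HH[u|U]$ you chose, and this is exactly why the paper introduces a \emph{second} entropy.

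First, the reaction bound $\Phi(u)\ge c|u-U|^2$ is false on $\overline D$. Take $u_1\to 0$ and $u_2/u_3=U_2/U_3$: the second summand of $\Phi$ vanishes, and the first behaves like $c\,u_1|\log u_1|\to 0$, while $|u-U|^2\to U_1^2>0$. So $\Phi$ degenerates at the faces $\{u_i=0\}$. Second, the diffusion coercivity $\na u:h''(u)A(u)\na u\ge c\sum_i|\na\log u_i|^2$ is impossible for the three-species entropy $h$: written in the variables $\na\log u_i$, the coefficient of $|\na\log u_1|^2$ is $u_1^2(HA)_{11}=A_{11}u_1+O(u_1^2)$, which vanishes as $u_1\to 0$. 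The bare structure of $h''A$ gives only $\sum_i z_i^2/u_i$ (control of $\na\sqrt{u_i}$), never $\sum_i z_i^2/u_i^2$, regardless of \eqref{1.abc}. The paper resolves both issues by testing with the \emph{two-species} relative entropy
\[
\Phi_\eps(u|U)=\sum_{i=1}^2\int_\Omega\Big(u_i-U_i-(U_i+\eps)\log\frac{u_i+\eps}{U_i+\eps}\Big)dx,
\]
whose Hessian is $K=\mathrm{diag}\big((U_i+\eps)/(u_i+\eps)^2\big)$. Then $f(u)\cdot\phi_\eps'(u|U)=-\sum_{i,j}b_{ij}(u_i-U_i)(u_j-U_j)\,\frac{u_i}{u_i+\eps}$, which gives $-c_b\|u-U\|_{L^2}^2$ up to an $O(\eps)$ error using only the positive definiteness of $(b_{ij})$; and a separate algebraic lemma shows $z^\top KA(u)z\ge c\sum_i z_i^2/(u_i+\eps)^2$, where \eqref{1.abc} arises precisely as the discriminant conditions for the quadratic forms in $w_i=\sqrt{U_i+\eps}\,z_i/(u_i+\eps)$ associated with the three pieces $A(u)=\sum_k u_kA^{(k)}$. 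Passing $\eps\to 0$ then yields positivity and the $L^2$ bounds; your argument with $\HH[u|U]$ (which the paper also uses, but only for monotonicity) then finishes the convergence along the lines you sketched.
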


Assumption \eqref{1.bij} is a special case of the weak competition case,
\begin{equation}\label{1.wcc}
  \frac{b_{11}}{b_{21}} > \frac{b_{10}}{b_{20}} > \frac{b_{12}}{b_{22}},
\end{equation}
which allows for coexistence of species in the Lotka-Volterra differential
equations \cite{Bro80}. This condition guarantees that $U\in D$, i.e.,
$U_1$, $U_2>0$ and $U_1+U_2<1$. 
The idea of the proof is to show that under the stated conditions
on the parameters, the functional
$$
  \Phi(u|U) = \sum_{i=1}^2\int_\Omega\bigg(u_i-U_i+U_i\log\frac{u_i}{U_i}\bigg)dx
$$
is a Lyapunov functional and satisfies
\begin{equation}\label{1.Phi}
  \frac{d}{dt}\Phi(u(t)|U) + c_b\int_0^t\|u(s)-U\|_{L^2(\Omega)}^2 ds
	+ c\sum_{i=1}^2\int_0^t\int_\Omega|\na\log u_i|^2 dxds \le 0,
\end{equation}
where $c_b>0$ is the smallest (positive) eigenvalue of $(b_{ij})_{i,j=1,2}$
and $c>0$ is another constant. For this property, we need condition \eqref{1.abc}.
Clearly, \eqref{1.Phi} is
only formal as $u_i$ may vanish, and we need to regularize to make this
inequality rigorous (see Section \ref{sec.time}). Inequality \eqref{1.Phi}
is the key step to deduce the properties mentioned in the theorem.

Our final result is the uniqueness of weak solutions to \eqref{1.eq}.

\begin{theorem}[Uniqueness of weak solutions]\label{thm.uni}
Let the assumptions of Theorem \ref{thm.ex} hold. Furthermore, let $f=0$ and
\begin{equation}\label{1.condu}
  \alpha_{22}=\alpha_{11}, \quad \gamma_{21}=\beta_{12}, 
	\quad \gamma_{22} = \beta_{11}.
\end{equation}
Then the weak solution to \eqref{1.eq}-\eqref{1.Aij} is unique.
\end{theorem}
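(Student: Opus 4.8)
The plan is to exploit the fact that conditions \eqref{1.condu}, on top of \eqref{1.symm1}--\eqref{1.symm2}, collapse the diffusion matrix to the special form
$$
  A(u) = \begin{pmatrix}
	\alpha + \delta u_1 + (\delta-\beta)u_2 & \beta u_1 \\
	\beta u_2 & \alpha + (\delta-\beta)u_1 + \delta u_2
	\end{pmatrix},
$$
with $\alpha := \alpha_{11}=\alpha_{22}>0$, $\beta := \beta_{12}=\gamma_{21}$, and $\delta := \beta_{11}=\gamma_{22}$. The decisive structural point is that $A_{11}+A_{21}=A_{12}+A_{22}=\alpha+\delta(u_1+u_2)$, so that the total density $\rho:=u_1+u_2$ obeys a closed \emph{scalar} equation, after which $u_1$ solves a \emph{linear} drift--diffusion equation whose coefficients depend only on $\rho$. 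The uniqueness proof therefore splits into two steps: first show that $\rho$ is uniquely determined, then that $u_1$ is.

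Step 1 (the density). Testing the weak formulation of \eqref{1.eq}-\eqref{1.bic} (recall $f=0$) with $(v,v)$ for scalar $v\in H^1(\Omega)$ shows that $\rho$ is a bounded weak solution of the Neumann problem $\pa_t\rho=\Delta\phi(\rho)$, $\phi(s):=\alpha s+\tfrac\delta2 s^2$, with $\rho(0)=u_1^0+u_2^0$. Since $\rho$ takes values in $[0,1]$ and $\phi'(s)=\alpha+\delta s\ge\min\{\alpha,\alpha+\delta\}\ge 0$ on $[0,1]$ by \eqref{1.cond}, the function $\phi$ is nondecreasing on the relevant range. Taking $v\equiv 1$ shows that $\int_\Omega\rho(t)\,dx$ is conserved, so for two weak solutions $u,\bu$ the difference $\rho-\bar\rho$ has zero mean; testing the equation for $\rho-\bar\rho$ with $(-\Delta_N)^{-1}(\rho-\bar\rho)$ then gives the classical inequality $\tfrac12\tfrac{d}{dt}\|\rho-\bar\rho\|_{H^1(\Omega)'}^2=-\int_\Omega(\phi(\rho)-\phi(\bar\rho))(\rho-\bar\rho)\,dx\le 0$, and since $\rho-\bar\rho$ vanishes at $t=0$ we conclude $\rho=\bar\rho$.

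Step 2 (the component $u_1$). Eliminating $u_2=\rho-u_1$ in the first equation and using $A_{11}-A_{12}=\alpha+(\delta-\beta)\rho=:D(\rho)$, one finds that $u_1$ solves $\pa_t u_1=\diver\big(D(\rho)\na u_1+\beta u_1\na\rho\big)$ weakly with no-flux condition, where $D(\rho)\ge c_0:=\min\{\alpha,\alpha+\delta-\beta\}>0$ on $[0,1]$ by \eqref{1.cond}. Set $\psi:=u_1-\bu_1$; then $\psi(0)=0$ and, crucially, $0\le u_1,\bu_1\le\rho$ forces the pointwise bound $|\psi|\le\rho$. For $\eps>0$ I would test the equation for $\psi$ with $\zeta_\eps:=\psi/(\rho+\eps)\in L^2_{\rm loc}(0,\infty;H^1(\Omega))$. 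Writing $\na\psi=(\rho+\eps)\na\zeta_\eps+\zeta_\eps\na\rho$ and using the weak equation for $\rho$ with test function $\zeta_\eps^2$ to identify $\int_\Omega\phi'(\rho)\zeta_\eps\na\rho\cdot\na\zeta_\eps\,dx=-\tfrac12\langle\pa_t\rho,\zeta_\eps^2\rangle$, all terms containing $\pa_t\rho$ cancel and one is left with
$$
  \tfrac12\frac{d}{dt}\int_\Omega(\rho+\eps)\zeta_\eps^2\,dx
	= -\int_\Omega D(\rho)(\rho+\eps)|\na\zeta_\eps|^2\,dx
	- \beta\eps\int_\Omega \zeta_\eps\,\na\rho\cdot\na\zeta_\eps\,dx .
$$
Because $|\zeta_\eps|\le 1$ and $\rho+\eps\ge\eps$, Young's inequality absorbs the last term into the dissipation at the cost of $C\eps\|\na\rho(t)\|_{L^2(\Omega)}^2$, whence $\tfrac{d}{dt}\int_\Omega(\rho+\eps)\zeta_\eps^2\,dx\le C\eps\|\na\rho(t)\|_{L^2(\Omega)}^2$. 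Integrating in time and using $\zeta_\eps(0)=0$ gives $\int_\Omega\psi(t)^2/(\rho+\eps)\,dx\le C\eps\int_0^t\|\na\rho(s)\|_{L^2(\Omega)}^2\,ds$, which is finite by \eqref{1.reg} and tends to $0$ as $\eps\to 0$. Fatou's lemma, together with $\psi=0$ a.e.\ on $\{\rho=0\}$ (again since $0\le u_1,\bu_1\le\rho$), then forces $\psi(t)=0$ a.e.\ in $\Omega$, i.e.\ $u_1=\bu_1$, and hence $u_2=\rho-u_1=\bu_2$.

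I expect the main obstacle to be Step 2: the equation for $u_1$ is uniformly parabolic only where $\rho$ is bounded away from $0$, and $\rho$ may vanish (for instance near $t=0$, since $u^0$ is merely $L^1$), so neither a plain energy estimate nor the change of variables $u_1/\rho$ is directly admissible. The regularization $\zeta_\eps=\psi/(\rho+\eps)$ is tailored to this difficulty: the bound $|\psi|\le\rho$ keeps $\int_\Omega\psi^2/(\rho+\eps)\,dx$ controlled uniformly in $\eps$, while the error produced by the regularization is $O(\eps)$ and vanishes in the limit. A secondary technical point is the rigorous justification of the time-differentiation identity $\langle\pa_t\psi,\zeta_\eps\rangle=\tfrac12\tfrac{d}{dt}\int_\Omega(\rho+\eps)\zeta_\eps^2\,dx+\tfrac12\langle\pa_t\rho,\zeta_\eps^2\rangle$ (and its analogue in Step 1), since the test function depends on time through $\rho$; this is handled by a standard mollification in time, using $\pa_t\psi,\pa_t\rho\in L^2_{\rm loc}(0,\infty;H^1(\Omega)')$ and $\psi,\zeta_\eps\in L^2_{\rm loc}(0,\infty;H^1(\Omega))$.
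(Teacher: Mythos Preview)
Your proof is correct. Step~1 is essentially identical to the paper's: both derive a closed porous-medium-type equation for $\rho=u_1+u_2$ and invoke the $H^{-1}$ method for uniqueness.

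Step~2 differs. The paper passes to the variable $\sigma=u_1-u_2$, which solves the same type of drift--diffusion equation $\pa_t\sigma=\diver(d(\rho)\na\sigma+\sigma\na V(\rho))$ as your $u_1$, and then applies Gajewski's semimetric $\Xi[\sigma_1,\sigma_2]=S[\sigma_1]+S[\sigma_2]-2S[(\sigma_1+\sigma_2)/2]$ with $S[\sigma]=\int_\Omega\sigma\log\sigma\,dx$; the drift terms cancel in $\tfrac{d}{dt}\Xi$ and the remaining Fisher-information term is nonnegative by subadditivity. Your route is a weighted $L^2$ estimate with weight $1/(\rho+\eps)$, where the key algebraic identity $D(\rho)+\beta\rho=\phi'(\rho)$ makes the dangerous cross term $\int_\Omega\phi'(\rho)\zeta_\eps\na\rho\cdot\na\zeta_\eps\,dx$ cancel exactly against $\tfrac12\langle\pa_t\rho,\zeta_\eps^2\rangle$. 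The Gajewski approach is more conceptual and extends readily to other entropy structures, but as written in the paper it tacitly requires $\sigma\ge 0$ (for $\sigma\log\sigma$ to make sense), which is not guaranteed for $u_1-u_2$; your choice of $u_1$ avoids this issue outright. Your argument is more elementary, uses only $L^2$ machinery, and handles the degeneracy at $\{\rho=0\}$ transparently via the pointwise bound $|\psi|\le\rho$ and the $\eps$-regularization. Both methods ultimately rely on the same structural miracle---that the drift potential is a function of $\rho$ alone---but exploit it through different cancellations.
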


Summarizing the assumptions on the parameters, the uniqueness result holds
for diffusion matrices of the form 
$$
  A(u) = \begin{pmatrix}
	\alpha_{11} + \beta_{11}u_1 + (\beta_{11}-\beta_{12})u_2 & \beta_{12}u_1 \\
	\beta_{12}u_2 & \alpha_{11} + (\beta_{11}-\beta_{12})u_1 + \beta_{11}u_2
	\end{pmatrix}.
$$
For the proof of Theorem \ref{thm.uni}, we first observe that under the conditions
imposed on the parameters in \eqref{1.Aij}, the sum $\rho:=u_1+u_2$
satisfies the diffusion equation $\pa_t\rho=\Delta F(\rho)$ for a certain 
nondecreasing function $F$. By the $H^{-1}$ method, this equation is uniquely
solvable. Furthermore, the difference $\sigma:=u_1-u_2$ solves the
drift-diffusion equation $\pa_t\sigma=\diver(d(\rho)\na\sigma+\sigma\na V(\rho))$
for certain functions $d(\rho)>0$ and $V(\rho)$. To prove the uniqueness of
{\em weak} solutions to this equation, we employ the method of Gajewski
\cite{Gaj94}. We stress the fact that we require only the regularity
$V(\rho)\in L^2(0,T;H^1(\Omega))$, which excludes many uniqueness techniques.
The idea of Gajewski is to differentiate the semimetric
$$
  \Xi[\sigma_1,\sigma_2] = S[\rho_1]+S[\rho_2]
	-2S\bigg[\frac{\sigma_1+\sigma_2}{2}\bigg], \quad\mbox{where }
	S[\sigma]=\int_\Omega \sigma\log\sigma dx,
$$
for two solutions $\sigma_1$ and $\sigma_2$ with respect to time and to show
that $\pa_t\Xi[\sigma_1(t),\sigma_2(t)]\le 0$ for $t>0$. Since 
$\Xi[\sigma_1(0),\sigma_2(0)]=0$, we infer from the nonnegativity of
$\Xi$ that $\Xi[\sigma_1(t),\sigma_2(t)]=0$ for all $t\ge 0$, and the
convexity of $\sigma\log\sigma$ shows that 
 $\sigma_1(t)=\sigma_2(t)=0$ for $t\ge 0$.

This paper is organized as follows.
Theorems \ref{thm.ex}, \ref{thm.time}, \ref{thm.uni} are proved in, respectively, 
Sections \ref{sec.thm}, \ref{sec.time}, \ref{sec.uni}. In the Appendix, we
derive some necessary conditions on the parameters in \eqref{1.Aij} to
apply the entropy method.


\section{Proof of Theorem \ref{thm.ex}}\label{sec.thm}

We apply the following theorem from \cite[Theorem 2]{Jue15}, here in a formulation
which is adapted to our situation.

\begin{theorem}[\cite{Jue15}]\label{thm.Jue15}
Let $D\subset(0,1)^2$ be a bounded domain, $u^0\in L^1(\Omega;\R^2)$ with 
$u^0(x)\in D$ for $x\in\Omega$ and assume that
\begin{description}
\item[\rm H1] There exists a convex function $h\in C^2(D;[0,\infty))$ 
such that its derivative $h':D\to\R^n$ is invertible.
\item[\rm H2] Let $\alpha^*>0$, $0\le m_i\le 1$ $(i=1,2)$ be such that for all 
$z=(z_1,z_2)^\top\in\R^2$ and $u=(u_1,u_2)^\top\in D$,
$$
  z^\top h''(u)A(u)z \ge \alpha^*\sum_{i=1}^2 u_i^{2(m_i-1)}z_i^2.
$$
\item[\rm H3] It holds $A\in C^0(D;\R^{2\times 2})$, $f\in C^0(D;\R^2)$,
and there exists $c_f>0$ 
such that for all $u\in D$, $f(u)\cdot h'(u)\le c_f(1+h(u))$.
\end{description}
Then there exists a weak solution $u$ to \eqref{1.eq}-\eqref{1.bic}
satisfying $u(x,t)\in\overline{D}$ for $x\in\Omega$, $t>0$ and
\begin{equation*}
  u\in L^2_{\rm loc}(0,\infty;H^1(\Omega;\R^2)), 
	\quad\pa_t u \in L^2_{\rm loc}(0,\infty;H^1(\Omega;\R^2)').
\end{equation*}
The initial datum is satisfied in the sense of $L^2$.
Moreover, if $h\in C^0(\overline D)$ and $f(u)\cdot h'(u)\le 0$ for all $u\in D$, 
the entropy $\mathcal H[u(\cdot,t)]=\int_\Omega h(u(x,t))dx$ is nonincreasing
in time.
\end{theorem}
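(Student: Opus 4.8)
The plan is to carry out the boundedness-by-entropy scheme: pass to entropy variables, solve a regularized time-discrete problem by a fixed-point argument, extract uniform estimates from the entropy structure, and send the regularization parameters to zero. First I would introduce the entropy variable $w=h'(u)$, which is well-defined and globally invertible by \textbf{H1}, and write $u=(h')^{-1}(w)=:u(w)$. Setting $B(w)=A(u(w))h''(u(w))^{-1}$, equation \eqref{1.eq} becomes $\pa_t u(w)=\diver(B(w)\na w)+f(u(w))$. The decisive structural gain is that, regardless of $w$, the reconstructed density $u(w)$ takes values in $D$; this is the source of the $L^\infty$ bound, obtained \emph{without} any maximum principle.

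Second, I would discretize time by the implicit Euler scheme with step $\tau>0$ and add an elliptic regularization $\eps((-\Delta)^\mu+1)w$ with $\mu>d/2$, so that $H^\mu(\Omega)\hookrightarrow L^\infty(\Omega)$. For given $u^{k-1}$, the regularized step is a nonlinear elliptic problem for $w^k\in H^\mu(\Omega)$, which I would solve by the Leray--Schauder fixed-point theorem: freeze the nonlinearities to obtain a linear coercive problem (the $\eps$-term supplies coercivity in $H^\mu$), then verify continuity and compactness of the solution map and a uniform bound on fixed points. The $H^\mu$-regularity forces $w^k\in L^\infty(\Omega)$, hence $u(w^k)\in\overline{D}$ is bounded.

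Third comes the a priori estimate, the heart of the argument. Testing the discrete equation with $w^k$ and using convexity of $h$ in the form $(u(w^k)-u^{k-1})\cdot w^k\ge h(u(w^k))-h(u^{k-1})$, the positive semidefiniteness of $h''A$ from \textbf{H2}, and the growth bound of \textbf{H3}, a discrete Gronwall argument yields, uniformly in $\tau$ and $\eps$, a bound on the entropy $\int_\Omega h(u^k)\,dx$ together with the weighted gradient estimate $\sum_k\tau\int_\Omega\sum_i u_i^{2(m_i-1)}|\na u_i|^2\,dx\le C$ and the regularization control $\eps\sum_k\tau\|w^k\|_{H^\mu}^2\le C$. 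Since $u$ is valued in the bounded set $\overline{D}$, the pointwise bound $|\na u_i|\le m_i^{-1}u_i^{1-m_i}|\na u_i^{m_i}|$ (using $u_i\le 1$, with the degenerate case $m_i=0$ handled analogously through $\na\log u_i$) upgrades the weighted estimate to a genuine bound for $\na u$ in $L^2$, which is exactly what \eqref{1.reg} requires.

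Finally I would pass to the limit. The uniform $L^\infty$ bound on the piecewise-in-time interpolants of $u$, the $L^2_{\rm loc}(H^1)$ bound on $u$, and a bound on the discrete time derivative in a dual space give, by the Aubin--Lions lemma, strong convergence of $u$ in $L^2(\Omega\times(0,T))$ along a subsequence, while $\na u$ converges weakly in $L^2$. As $A$ is continuous and $u$ bounded, $A(u^{(\tau)})\to A(u)$ strongly in every $L^p$, so $A(u^{(\tau)})\na u^{(\tau)}\rightharpoonup A(u)\na u$ and $f(u^{(\tau)})\to f(u)$; the regularization vanishes because $\eps\|w^{(\tau)}\|_{H^\mu}^2$ is bounded while $\eps\to 0$. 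Letting $\tau\to 0$ and then $\eps\to 0$ recovers the weak formulation, and the entropy monotonicity survives by weak lower semicontinuity under the sign condition $f\cdot h'\le 0$. The step I expect to be the main obstacle is precisely the identification of the nonlinear flux in this limit: because $A$ is neither symmetric nor positive definite, coercivity is available only in the degenerate weighted form of \textbf{H2}, and the whole scheme is engineered to circumvent this by working in the $w$-variable. The delicate points are that compactness must first be run on the possibly degenerate quantities $u_i^{m_i}$ and then transferred back to $u$ by continuity, and that the weighted bound must be converted to control of $\na u$ itself, both of which hinge essentially on the boundedness of $D$.
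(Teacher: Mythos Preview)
Your proposal correctly outlines the boundedness-by-entropy scheme of \cite{Jue15}: entropy variables, implicit Euler with $H^\mu$-regularization, Leray--Schauder fixed point, discrete entropy inequality via convexity and \textbf{H2}--\textbf{H3}, and Aubin--Lions compactness to pass to the limit. However, the paper does not actually prove this theorem; it is quoted verbatim as \cite[Theorem~2]{Jue15} and invoked as a black box, with the paper only remarking that the final monotonicity statement ``is a consequence of the proof of the theorem in \cite{Jue15}.'' So there is no proof in the paper to compare against---your sketch is essentially the argument of the cited reference, and as such it is accurate in its main lines.
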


The last statement is a consequence of the proof of the theorem in \cite{Jue15}.

Now, choose the entropy density
\begin{equation}\label{a.h}
  h(u|\overline u) = \sum_{i=1}^3\overline u_i\bigg(\frac{u_i}{\overline u_i}
	\log\frac{u_i}{\overline u_i} - \frac{u_i}{\overline u_i} + 1\bigg),
	\quad u_3 = 1-u_1-u_2, \quad
	\overline u_3 = 1-\overline u_1-\overline u_2,
\end{equation}
defined on $D$ (see \eqref{1.D}). This function fulfills Hypothesis H1.
It remains to verify Hypotheses H2 and H3.

\subsection{Verification of Hypothesis H2}

Let $H(u)=h''(u)$. We require that the matrix $H(u)A(u)$ is symmetric. 
This leads to conditions \eqref{1.symm1}-\eqref{1.symm2},
and we are left with the five parameters $\alpha_{11}$, $\alpha_{22}$, $\beta_{11}$,
$\beta_{12}$, and $\gamma_{22}$. We prove that $H(u)A(u)$ is positive definite
under additional assumptions.

\begin{lemma}\label{lem.HA}
Let conditions \eqref{1.symm1}-\eqref{1.cond} hold.
Then there exists $\eps>0$ such that for all $z\in\R^2$ and all $u\in D$,
\begin{equation}\label{a.H2}
  z^\top H(u)A(u)z \ge \eps\left(\frac{z_1^2}{u_1}+\frac{z_2^2}{u_2}\right).
\end{equation}
\end{lemma}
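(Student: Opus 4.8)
The plan is to compute the matrices $H(u)=h''(u)$ and $H(u)A(u)$ explicitly and then verify the quadratic-form lower bound \eqref{a.H2} by estimating the entries of $H(u)A(u)$ against the diagonal form $\mathrm{diag}(1/u_1,1/u_2)$. First I would differentiate the entropy density \eqref{a.h}. Writing $h(u)$ in terms of $u_1,u_2$ with $u_3=1-u_1-u_2$, one finds $\pa_i h = \log(u_i/\bu_i)-\log(u_3/\bu_3)$ and hence
$$
  H(u) = \begin{pmatrix} 1/u_1 + 1/u_3 & 1/u_3 \\ 1/u_3 & 1/u_2 + 1/u_3 \end{pmatrix}.
$$
Note $H(u)$ is independent of $\bu$, so the reference state plays no role in Hypothesis H2; it will only matter later for $h\in C^0(\overline D)$ and the sign condition in H3. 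This $H$ is symmetric positive definite on $D$ with inverse having bounded entries (since $\det H = (u_1u_2+u_1u_3+u_2u_3)/(u_1u_2u_3)$).

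Next I would form the product $M(u):=H(u)A(u)$ using the prescribed form of $A$, namely $A_{11}=\alpha_{11}+\beta_{11}u_1+\gamma_{11}u_2$, $A_{12}=\beta_{12}u_1$, $A_{21}=\gamma_{21}u_2$, $A_{22}=\alpha_{22}+\beta_{22}u_1+\gamma_{22}u_2$, subject to \eqref{1.symm2}. The point of imposing \eqref{1.symm1}-\eqref{1.symm2} is exactly that $M(u)$ becomes symmetric; I would verify this (it is the content of the sentence ``We require that the matrix $H(u)A(u)$ is symmetric''), which reduces the problem to showing $M(u)$ is positive definite with the stated quantitative lower bound. Because of the factor $1/u_3$ appearing in $H$ and the factors $u_1$, $u_2$ in the off-diagonal entries of $A$, several potentially singular terms combine: e.g.\ the $(1,1)$ entry of $M$ is $(\alpha_{11}+\beta_{11}u_1+\gamma_{11}u_2)/u_1 + (\alpha_{11}+\beta_{11}u_1+\gamma_{11}u_2 + \beta_{12}u_1)/u_3$, and using the relations \eqref{1.symm2} the $u_3$-denominator terms simplify. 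The strategy is to write $M(u) = \mathrm{diag}(\alpha_{11}/u_1,\,\alpha_{22}/u_2) + N(u)$ where $N(u)$ collects the remaining terms, which should have entries that are bounded functions on $D$ times $1/u_i$ or genuinely bounded; then bound the quadratic form $z^\top N(u) z$ from below by $-C(z_1^2/u_1+z_2^2/u_2) \cdot(\text{small})$ is not quite right since $N$ need not be small, so instead I would show $z^\top M(u)z$ directly dominates $\eps(z_1^2/u_1+z_2^2/u_2)$ by completing the square, exploiting the inequalities $\alpha_{11}>0$, $\alpha_{22}>0$, $\alpha_{11}+\beta_{11}\ge 0$, $\alpha_{22}+\gamma_{22}\ge 0$, and the strict inequality $\beta_{12}<\alpha_{11}+\min\{\beta_{11},\gamma_{22}\}$.

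The main obstacle I anticipate is controlling the cross term and the $1/u_3$ contributions simultaneously: after using \eqref{1.symm2} the $(1,1)$ entry should reduce to something like $\alpha_{11}/u_1 + (\alpha_{11}+\beta_{11})$ plus terms controlled on $D$, the $(2,2)$ entry symmetrically to $\alpha_{22}/u_2 + (\alpha_{22}+\gamma_{22})$ plus controlled terms, and the symmetrized off-diagonal entry to $\beta_{12} + (\text{bounded})$. Writing the quadratic form as roughly
$$
  \frac{\alpha_{11}}{u_1}z_1^2 + \frac{\alpha_{22}}{u_2}z_2^2
  + (\alpha_{11}+\beta_{11})z_1^2 + (\alpha_{22}+\gamma_{22})z_2^2 + 2\beta_{12}z_1z_2 + R(u,z),
$$
where $R$ groups the residual terms that are bounded by a constant times $|z|^2$ (no $1/u_i$ singularity), the first two terms give the desired $\eps(z_1^2/u_1+z_2^2/u_2)$ with $\eps<\min\{\alpha_{11},\alpha_{22}\}$ once we absorb any negative part of $R$ by writing $z_i^2/u_i \ge z_i^2$ on $D$ (since $u_i<1$). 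This absorption requires a small margin, which is guaranteed precisely because $\alpha_{11},\alpha_{22}>0$ are strict. The delicate bookkeeping is (i) verifying that the residual $R$ really is free of $1/u_i$ singularities after all the cancellations coming from \eqref{1.symm2}, and (ii) checking that the $2\times 2$ ``bounded'' block $(\alpha_{11}+\beta_{11})z_1^2 + 2\beta_{12}z_1z_2 + (\alpha_{22}+\gamma_{22})z_2^2$ plus the negative part of $R$ does not overwhelm the margin left in the singular diagonal part — this is where I expect to need all of \eqref{1.cond}, and possibly a careful choice of how much of the $z_i^2/u_i$ terms to spend. I would finish by fixing $\eps$ explicitly in terms of the five parameters and noting that $\eps>0$ by \eqref{1.cond}; this verifies H2 with $m_1=m_2=1/2$ in the notation of Theorem \ref{thm.Jue15}.
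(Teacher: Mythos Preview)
Your approach is genuinely different from the paper's, and while it can be made to work, your sketch misidentifies where the cancellations occur.

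The paper never computes $M(u)=H(u)A(u)$ entry by entry. Instead it first proves a separate lemma (Lemma~\ref{lem.HA2}): $HA$ is positive \emph{semidefinite} on $D$ if and only if the non-strict version \eqref{a.cond} of \eqref{1.cond} holds. That lemma is established by studying $u_iu_3(HA)_{ii}$ on $\partial D$ and applying the strong maximum principle to show one diagonal entry is positive in $D$, then showing $\det A\ge 0$ on $D$ via a concavity-along-lines argument. The strict bound \eqref{a.H2} is then obtained by a perturbation trick: writing $\Lambda=\mathrm{diag}(1/u_1,1/u_2)=HP$ for an explicit matrix $P$, one has $HA-\eps\Lambda=HA^\eps$ with $A^\eps=A-\eps P$; crucially $A^\eps$ has the same linear structure as $A$ with shifted parameters $\alpha_{ii}^\eps=\alpha_{ii}-\eps$, $\beta_{11}^\eps=\beta_{11}+\eps$, etc., and the strict inequalities in \eqref{1.cond} guarantee that for small $\eps$ these still satisfy \eqref{a.cond}, whence $HA^\eps\ge 0$.

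Your direct route has a concrete gap. The individual entries of $M$ do \emph{not} reduce to ``$\alpha_{ii}/u_i+\text{const}+\text{bounded}$'' as you assert: each entry carries an unbounded $1/u_3$ term, and $M_{11}$ also contains $\gamma_{11}u_2/u_1$, unbounded as $u_1\to 0$. The cancellation you need happens only at the level of the quadratic form: the $1/u_3$ contributions combine as $(z_1+z_2)(az_1+bz_2)/u_3$ with $a-b=(\alpha_{11}-\alpha_{22})u_3$, so the singular part is $(b/u_3)(z_1+z_2)^2$ with $b=\alpha_{22}u_3+(\alpha_{11}+\beta_{11})u_1+(\alpha_{22}+\gamma_{22})u_2\ge 0$ by \eqref{1.cond}; and the true coefficient of $z_1^2/u_1$ is $\alpha_{11}+\gamma_{11}u_2$, which is bounded below by $\min\{\alpha_{11},\,\alpha_{11}+\gamma_{22}-\beta_{12}\}>0$ (similarly for $z_2^2/u_2$). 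Without these two observations your absorption step cannot close; with them it can, though you still have to check that the remaining bounded bilinear form does not spoil positivity uniformly in $u$. The paper's perturbation argument sidesteps all of this algebra.
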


The lemma shows that Hypothesis H2 is fulfilled with $m_i=\frac12$.
First, we verify the following result.

\begin{lemma}\label{lem.HA2}
The matrix $H(u)A(u)$ is positive semidefinite for all $u\in D$ if and only if
\begin{equation}\label{a.cond}
  \alpha_{11}\ge 0,\quad \alpha_{22}\ge 0, \quad 
	\beta_{12}\le\alpha_{11}+\min\{\beta_{11},\gamma_{22}\}, \quad
	\alpha_{11}+\beta_{11}\ge 0, \quad \alpha_{22}+\gamma_{22}\ge 0.
\end{equation}
\end{lemma}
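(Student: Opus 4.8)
The plan is to compute the matrix $M(u) := H(u)A(u)$ explicitly and analyze when its symmetric part (which equals $M(u)$ itself, since by the discussion preceding the lemma conditions \eqref{1.symm1}--\eqref{1.symm2} have already been imposed to make $M$ symmetric) is positive semidefinite on the triangle $D$. First I would work out $H(u) = h''(u)$ from \eqref{a.h}. Writing $u_3 = 1-u_1-u_2$, one has $\partial_i\partial_j h = \delta_{ij}/u_i + 1/u_3$ for $i,j\in\{1,2\}$, so
$$
  H(u) = \begin{pmatrix} \frac{1}{u_1}+\frac{1}{u_3} & \frac{1}{u_3} \\[1mm]
  \frac{1}{u_3} & \frac{1}{u_2}+\frac{1}{u_3} \end{pmatrix}.
$$
Using \eqref{1.symm1}--\eqref{1.symm2} to reduce $A(u)$ to the five free parameters $\alpha_{11},\alpha_{22},\beta_{11},\beta_{12},\gamma_{22}$, I would then multiply out $M(u)=H(u)A(u)$. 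The key simplification to look for is that the entries of $M(u)$ should become affine (or at worst rational with denominators $u_1,u_2,u_3$) in $u$, and in fact the whole point of the algebraic conditions \eqref{1.symm2} is presumably that $M(u)$ collapses to a pleasant form — I expect something like
$$
  M(u) = \begin{pmatrix} \frac{\alpha_{11}+\beta_{11}u_1+\gamma_{11}u_2}{u_1} & \ast \\ \ast & \frac{\alpha_{22}+\beta_{22}u_1+\gamma_{22}u_2}{u_2} \end{pmatrix}
$$
plus terms proportional to $1/u_3$ times a rank-one piece. The natural strategy is to decompose $M(u) = M_0(u) + \frac{1}{u_3}N$ where $N$ captures the $u_3$-dependence and is rank one or at least has a sign-definite structure.

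The core of the argument is then to characterize positive semidefiniteness of $z^\top M(u) z \ge 0$ for all $z\in\R^2$, $u\in D$. A $2\times 2$ symmetric matrix is positive semidefinite iff both diagonal entries and the determinant are nonnegative. So I would impose: (i) $M_{11}(u)\ge 0$ for all $u\in D$, (ii) $M_{22}(u)\ge 0$ for all $u\in D$, and (iii) $\det M(u)\ge 0$ for all $u\in D$. Since $\det M = \det H \cdot \det A$ and $\det H(u) = \frac{1}{u_1 u_2}+\frac{1}{u_1 u_3}+\frac{1}{u_2 u_3} > 0$ on $D$, condition (iii) reduces to $\det A(u)\ge 0$ on $D$. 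For conditions (i), (ii), and $\det A(u)\ge 0$, each is an affine (resp. quadratic) inequality in $(u_1,u_2)$ on the triangle $D$; by affinity/convexity it suffices to check the inequalities on the closure $\overline D$, and for an affine function on a triangle the extrema sit at the three vertices $(0,0)$, $(1,0)$, $(0,1)$. Evaluating $M_{11}$, $M_{22}$ (and analyzing $\det A$) at these three corners should produce exactly the five scalar conditions in \eqref{a.cond}: the corner $(0,0)$ gives $\alpha_{11}\ge0$ and $\alpha_{22}\ge0$; the corners $(1,0)$ and $(0,1)$ give $\alpha_{11}+\beta_{11}\ge0$ and $\alpha_{22}+\gamma_{22}\ge0$; and the off-diagonal/determinant analysis yields $\beta_{12}\le \alpha_{11}+\min\{\beta_{11},\gamma_{22}\}$.

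For the converse direction — that \eqref{a.cond} implies positive semidefiniteness everywhere on $D$ — I would run the same vertex argument backwards: show each of the three scalar functions is affine (or convex) in $u$ on $D$, hence its minimum over $\overline D$ is attained at a vertex, hence \eqref{a.cond} forces it to be nonnegative throughout. One technical point to watch: the determinant $\det A(u)$ may be genuinely quadratic rather than affine, in which case I would instead factor it or complete the square, or verify that under \eqref{a.cond} it is a nonnegative combination of the corner values; alternatively one can argue that $\det A$ is concave in $u$ on $D$ (if the quadratic form has the right sign) so that again the minimum is at a vertex. \textbf{The main obstacle} I anticipate is precisely this determinant computation: keeping the bookkeeping of the $1/u_3$ cross terms straight when forming $M(u)$, and then showing that the condition $\beta_{12}\le\alpha_{11}+\min\{\beta_{11},\gamma_{22}\}$ — with the $\min$ — is exactly what makes the relevant off-diagonal-versus-diagonal comparison (equivalently $\det A(u)\ge 0$, or nonnegativity of a Schur complement) hold uniformly on the triangle. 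The appearance of $\min\{\beta_{11},\gamma_{22}\}$ strongly suggests two separate sub-inequalities (one tight near the vertex $(1,0)$, one near $(0,1)$), and verifying that these two together are both necessary and sufficient is the delicate part.
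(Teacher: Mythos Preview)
Your framework---reduce to Sylvester's criterion (both diagonal entries and the determinant of the symmetric matrix $HA$ nonnegative), note $\det(HA)=\det H\cdot\det A$ with $\det H>0$, and extract the necessary conditions by evaluating near the vertices of $D$---matches the paper's approach, and your identification of which vertex produces which inequality is correct.

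The genuine gap is in the sufficiency direction. After clearing the denominators $u_i,u_3$ from $(HA)_{ii}$, the resulting scalar functions are \emph{quadratic} in $u$, not affine; and neither they nor $\det A(u)$ are in general concave (or convex) on $D$. Hence the ``minimum at a vertex'' argument does not go through as stated. Concretely, if one sets $f_1=u_1u_3(HA)_{11}$ and $f_2=u_2u_3(HA)_{22}$, a direct computation gives
\[
  \Delta f_1 = -\Delta f_2 = 2(\alpha_{11}-\alpha_{22}+\beta_{11}-\gamma_{22}),
\]
a constant of \emph{indeterminate sign}. The paper exploits precisely this: since $\Delta f_1$ and $\Delta f_2$ have opposite signs, at least one of $f_1,f_2$ is superharmonic, and the strong maximum principle (together with nonnegativity on $\partial D$, which does follow from \eqref{a.cond}) forces that one to be positive in $D$. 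This gives positivity of \emph{one} diagonal entry, which is all Sylvester needs once $\det(HA)\ge 0$. For $\det A$, the Hessian $C=(\det A)''$ satisfies $\det C\le 0$, so $C$ is typically indefinite and $\det A$ is a saddle, not concave; the paper instead picks an eigenvector $v$ for the nonpositive eigenvalue of $C$, observes that $r\mapsto\det A(u+rv)$ is concave, and deduces nonnegativity on $D$ from nonnegativity on $\partial D$ along these one-parameter segments.

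So your plan needs two upgrades: replace the vertex/affinity argument for the diagonal entries by the superharmonicity/maximum-principle trick, and replace the hoped-for global concavity of $\det A$ by directional concavity along the eigenvector of the nonpositive eigenvalue of $(\det A)''$.
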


\begin{proof}
{\em Step 1: equations \eqref{a.cond} are necessary.}
We first prove that the positive semidefiniteness of $H(u)A(u)$ implies
\eqref{a.cond} by studying $H(u)A(u)$ close to the vertices of $D$. To this end,
we define the matrix-valued functions
\begin{align*}
  & F_1(s)=s H(s,s)A(s,s), \quad F_2(s) = sH(1-2s,s)A(1-2s,s), \\
	& F_3(s) = sH(s,1-2s)A(s,1-2s) \quad\mbox{for } s\in(0,\tfrac12).
\end{align*}
A straightforward computation shows that
\begin{align*}
  \lim_{s\to 0+}F_1(s) &= \begin{pmatrix} \alpha_{11} & 0 \\ 0 & \alpha_{22}
	\end{pmatrix}, \quad
  \lim_{s\to 0+}F_2(s) = \begin{pmatrix} 
	\alpha_{11}+\beta_{11} & \alpha_{11}+\beta_{11} \\
	\alpha_{11}+\beta_{11} & 2(\alpha_{11}+\beta_{11})-\beta_{12} \end{pmatrix}, \\
	\lim_{s\to 0+}F_3(s) &= \begin{pmatrix} 
	\alpha_{11}+\alpha_{22}+2\gamma_{22}-\beta_{12} & \alpha_{22}+\gamma_{22} \\
	\alpha_{22}+\gamma_{22} & \alpha_{22}+\gamma_{22}
	\end{pmatrix}.
\end{align*}
Since $H(u)A(u)$ is assumed to be positive semidefinite on $D$, also
$\lim_{s\to 0+}F_i(s)$ must be positive semidefinite for $i=1,2,3$.
Sylvester's criterion applied to these matrices yields \eqref{a.cond} since
\begin{align*}
  \det\big(\lim_{s\to 0+}F_2(s)\big) &= (\alpha_{11}+\beta_{11})
	(\alpha_{11}+\beta_{11}-\beta_{12}) \ge 0, \\
  \det\big(\lim_{s\to 0+}F_3(s)\big) &= (\alpha_{22}+\gamma_{22})
	(\alpha_{11}+\gamma_{22}-\beta_{12}) \ge 0.
\end{align*}

{\em Step 2: sign of the diagonal elements of $HA$.}
Let conditions \eqref{a.cond} hold.
We claim that either $HA:=H(u)A(u)$ is positive semidefinite or one of the
two coefficients $(HA)_{11}$ or $(HA)_{22}$ is positive in $D$.
For this, we introduce the functions
\begin{align*}
  f_1(u_2,u_3) &= (1-u_2-u_3)u_3(HA)_{11}(1-u_2-u_3,u_2), \quad (u_2,u_3)\in D, \\
	f_2(u_1,u_3) &= (1-u_1-u_3)u_3(HA)_{22}(u_1,1-u_1-u_3), \quad (u_1,u_3)\in D.
\end{align*}
We wish to apply the strong maximum principle to $f_1$ and $f_2$. In fact,
$f_1$ and $f_2$ are nonnegative on $\pa D$ since \eqref{a.cond} implies that
\begin{align}
  f_1|_{u_3=1-u_2} &= (1-u_2)\big(\alpha_{11}+(\gamma_{22}-\beta_{12})u_2\big)
	\ge \alpha_{11}(1-u_2)^2 \ge 0, \label{a.f11} \\
	f_1|_{u_2=0} &= \alpha_{11} + \beta_{11}(1-u_3) \ge \alpha_{11}u_3 \ge 0, 
	\label{a.f12} \\
	f_1|_{u_3=0} &= (1-u_2)\big((\alpha_{11}+\beta_{11})(1-u_2)
	+ \alpha_{22}+\gamma_{22}\big) \ge 0, \label{a.f13} \\
	f_2|_{u_1=0} &= \alpha_{22} + \gamma_{22}(1-u_2) \ge \alpha_{22}u_2 \ge 0, 
	\label{a.f21} \\
	f_2|_{u_3=1-u_1} &= (1-u_1)\big(\alpha_{22}(1-u_1) 
	+ (\alpha_{11}+\beta_{11}-\beta_{12})u_1\big) \ge \alpha_{22}(1-u_1)^2 \ge 0, 
	\label{a.f22} \\
	f_2|_{u_3=0} &= (1-u_1)\big((\alpha_{22}+\gamma_{22})(1-u_1)
	+ (\alpha_{11}+\beta_{11})u_1\big) \ge 0. \label{a.f23} 
\end{align}
Furthermore, a straightforward computation gives
$$
  \Delta_{(u_2,u_3)}f_1 = -\Delta_{(u_1,u_3)}f_2 
	= 2(\alpha_{11}-\alpha_{22}+\beta_{11}-\gamma_{22}) \quad\mbox{in }D.
$$
Consequently, either $\Delta_{(u_2,u_3)}f_1\le 0$ or $\Delta_{(u_1,u_3)}f_2\le 0$
in $D$. By the strong maximum principle, there exists $i\in\{1,2\}$ such that
$f_i>0$ in $D$ unless $f_i\equiv 0$ in $D$. This means that $(HA)_{ii}>0$ in $D$
unless $(HA)_{ii}\equiv 0$ in $D$.

To complete the claim, we show that if one of the coefficients $(HA)_{11}$ or
$(HA)_{22}$ is identically zero in $D$, then $HA$ is positive semidefinite in $D$.
Consider first the case $(HA)_{11}\equiv 0$ in $D$, i.e.\ $f_1\equiv 0$ in $D$.
Then also $f_1\equiv 0$ on $\pa D$. We deduce from \eqref{a.f11}-\eqref{a.f13}
the relations $\alpha_{11}=\beta_{11}=0$, $\alpha_{22}=-\gamma_{22}$,
and $\gamma_{22}=\beta_{12}$ and so, 
$$
  HA = \alpha_{22}\begin{pmatrix} 0 & 0 \\ 0 & 1/u_2 \end{pmatrix}.
$$
Since $\alpha_{22}\ge 0$, $HA$ is positive semidefinite. In the remaining case
$(HA)_{22}\equiv 0$ in $D$, \eqref{a.f21}-\eqref{a.f23} lead to
$$
  HA = \alpha_{11}\begin{pmatrix} 1/u_1 & 0 \\ 0 & 0 \end{pmatrix},
$$
and because of $\alpha_{11}\ge 0$, this matrix is positive semidefinite.
This shows the claim.

{\em Step 3: sign of the determinant of $HA$.}
By Step 2, we can assume that one of the two coefficients $(HA)_{11}$ or 
$(HA)_{22}$ is positive in $D$. We show that $\det A\ge 0$ in $D$. Then
$\det(HA)=\det H\det A\ge 0$ in $D$, and by Sylvester's criterion,
these properties give the positive semidefiniteness of $HA$. This proves
that conditions \eqref{a.cond} are sufficient for the positive semidefiniteness
of $HA$.

We consider $\det A$ on $\pa D$. Taking into account conditions \eqref{a.cond},
we find that
\begin{align*}
  \det A(0,u_2) &= (\alpha_{22}+\gamma_{22}u_2)\big(\alpha_{11}
	+(\gamma_{22}-\beta_{12})u_2\big) 
	\ge \alpha_{22}(1-u_2)\alpha_{11}(1-u_2) \ge 0, \\
	\det A(u_1,0) &= (\alpha_{11}+\beta_{11}u_1)\big(\alpha_{22}(1-u_1)
	+ (\alpha_{11}+\beta_{11}-\beta_{12})u_1\big) \\
	&\ge \alpha_{22}(1-u_1)\alpha_{11}(1-u_1) \ge 0, \\
	\det A(u_1,1-u_1) &= \big((\alpha_{22}+\gamma_{22})(1-u_1)+\alpha_{11}+\beta_{11}
	\big) \\
	&\phantom{xx}{}\times\big(\alpha_{11}-\beta_{12}+\gamma_{22}
	+(\beta_{11}-\gamma_{22})u_1\big) \\
	&\ge (\alpha_{11}+\beta_{11})\big(-\min\{\beta_{11}-\gamma_{22},0\}
	+(\beta_{11}-\gamma_{22})u_1\big) \ge 0.
\end{align*}
We conclude that $\det A\ge 0$ on $\pa D$. 

Next, we consider the Hessian $C:=(\det A)''(u)$ with respect to $u$. Since
$\det A$ is a (multivariate) quadratic polynomial in $u$, 
$C$ is a symmetric constant matrix satisfying
$$
  \det C = -\big(\beta_{11}\beta_{12}+\gamma_{22}(\alpha_{11}-\alpha_{22}-\beta_{12})
	\big)^2 \le 0.
$$
Thus, one of the two eigenvalues of $C$ is nonpositive, say $\lambda\le 0$.
Let $v\in\R^2\backslash\{0\}$ be a corresponding eigenvector, i.e.\ $Cv=\lambda v$.
Furthermore, let $u\in D$ be arbitrary and 
let $I_u\subset\R$ be the (unique) bounded open
interval containing zero with the property that the segment $u+I_uv$ is contained
in $D$ and its extreme points belong to $\pa D$. Define $\phi(r)=\det A(u+rv)$
for $r\in I_u$. Then $\phi''(r)=v^\top Cv=\lambda|v|^2\le 0$ for all $r\in I_u$.
We infer that $\phi$ is concave and attains its minimum at the border of $I_u$.
Since $\det A\ge 0$ on $\pa D$, this implies that $\det A(u+rv)\ge 0$
for all $r\in I_u$. By choosing $r=0\in I_u$, we conclude that $\det A(u)\ge 0$.
As $u\in D$ was arbitrary, this finishes the proof.
\end{proof}

\begin{proof}[Proof of Lemma \ref{lem.HA}]
The claim \eqref{a.H2} is equivalent to the positive semidefiniteness of the
matrix $HA-\eps\Lambda$ for a suitable $\eps>0$, where
$$
  \Lambda = \begin{pmatrix} 1/u_1 & 0 \\ 0 & 1/u_2 \end{pmatrix}.
$$
Since $\Lambda=HP$, where
$$
  P = \begin{pmatrix} 1-u_1 & -u_1 \\ -u_2 & 1-u_2 \end{pmatrix},
$$
we can write $HA-\eps\Lambda=HA^\eps$ with $A^\eps=A-\eps P$. We observe that
$A^\eps$ has the same structure as $A$ with the parameters
$$
  \alpha_{11}^\eps = \alpha_{11}-\eps, \quad \alpha_{22}^\eps=\alpha_{22}-\eps,
	\quad \beta_{11}^\eps = \beta_{11}+\eps, \quad 
	\beta_{12}^\eps = \beta_{12} + \eps, 
	\quad \gamma_{22}^\eps = \gamma_{22} + \eps.
$$
{}From Lemma \ref{lem.HA} we conclude that $HA^\eps$ is positive semidefinite if and
only if \eqref{a.cond} holds for the parameters 
$(\alpha_{11}^\eps,\alpha_{22}^\eps,\beta_{11}^\eps,\beta_{12}^\eps,\gamma_{22}^\eps)$ 
instead of $(\alpha_{11},\alpha_{22},\beta_{11},\beta_{12},\gamma_{22})$.
This means that $HA-\eps\Lambda$ is positive semidefinite for a suitable $\eps>0$
if and only if \eqref{1.cond} holds.
\end{proof}

\begin{remark}\label{rem}\rm
Let $\alpha_{11}=\alpha_{22}=0$ but $\beta_{11}>0$ and $\gamma_{22}>0$. 
We claim that there exists $\eps>0$ such that for all $z\in\R^2$ and $u\in D$,
$$
  z^\top H(u)A(u)z \ge \eps|z|^2
$$
holds, i.e., Hypothesis H2 is satisfied for $m_i=1$, and the conclusion of
Theorem \ref{thm.ex} holds. We show that
$HA-\eps{\mathbb I}$ is positive semidefinite, where ${\mathbb I}$ is the
identity matrix in $\R^{2\times 2}$. The matrix can be written as
$$
  HA-\eps{\mathbb I} = (HA)^\eps + \frac{\eps}{1-u_1-u_2}
	\begin{pmatrix} 1 & 1 \\ 1 & 1 \end{pmatrix},
$$
where $(HA)^\eps$ has the same structure as $HA$ but with
$\beta_{11}$, $\beta_{12}$, $\gamma_{22}$ replaced by 
$\beta_{11}^\eps=\beta_{11}-\eps$, $\beta_{12}^\eps=\beta_{12}-\eps$,
$\gamma_{22}^\eps=\gamma_{22}-\eps$.
Choosing $0<\eps\le\min\{\beta_{11},\gamma_{22}\}$, conditions \eqref{a.cond}
are satisfied for these parameters. Thus, Lemma \ref{lem.HA} shows that
$(HA)^\eps$ is positive semidefinite and we conclude that also
$HA-\eps{\mathbb I}$ is positive semidefinite, proving the claim.
\qed
\end{remark}


\subsection{Verification of H3}

By definition of $f_i$, we write
$$ 
  f_i(u)\pa_{u_i}h(u) = u_ig_i(u)\log u_i - u_ig_i(u)\log(1-u_1-u_2) 
	- u_i g_i(u)\log(\overline{u}_i/\overline{u}_3). 
$$
Since $g_i(u)$ and $u_i\log u_i$ are bounded in $\overline{D}$, the first
term on the right-hand side is bounded. The second term is bounded
in $\{0<u_1+u_2\le 1-\eps\}$ by a constant which depends on $\eps$.
Moreover, we have $g_i(u)\le 0$ in $\{1-\eps<u_1+u_2<1\}$ by assumption,
which implies that $-u_ig_i(u)\log(1-u_1-u_2)\le 0$ in $\{1-\eps<u_1+u_2<1\}$.
Finally, the third term is trivially bounded.
Thus, $f_i(u)\pa_{u_i}(u)\le c$ for a suitable constant $c>0$.


\subsection{Bounded weak solutions to the SKT model}\label{sec.skt}

Applying Theorem \ref{thm.ex} to \eqref{1.eq}-\eqref{1.bic} with
diffusion matrix \eqref{1.SKT}, we infer the following corollary.

\begin{corollary}[Bounded weak solutions to \eqref{1.SKT}]\label{coro.ex}
Let the assumptions of Theorem \ref{thm.ex} hold except that the coefficients
of $A$, defined in \eqref{1.SKT}, are nonnegative and satisfy
$a_{10}>0$, $a_{20}>0$ as well as
\begin{equation}\label{1.a}
  a_{21} = a_{11}, \quad a_{22} = a_{12}, \quad a_{20}-a_{10} = a_{11}-a_{22}\ge 0.
\end{equation}
Furthermore, let $f(u)$ be given by the Lotka-Volterra terms \eqref{1.LV}
satisfying
\begin{equation}\label{1.b}
  b_{10} \le \min\{b_{11},b_{12}\}, \quad b_{20} \le \min\{b_{21},b_{22}\}.
\end{equation}
Then there exists a bounded weak solution $u=(u_1,u_2)$ to \eqref{1.eq}-\eqref{1.bic}
satisfying $u_1$, $u_2\ge 0$, $u_1+u_2\le 1$ in $\Omega\times(0,\infty)$, and
\eqref{1.reg}.
\end{corollary}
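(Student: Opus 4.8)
The plan is to show that the SKT setting of Corollary~\ref{coro.ex} is merely a special case of the situation already handled by Theorem~\ref{thm.ex}: one matches the coefficients of \eqref{1.SKT} with those of \eqref{1.Aij}, checks that assumptions \eqref{1.a} and \eqref{1.b} imply conditions \eqref{1.symm1}--\eqref{1.cond} together with the hypothesis on $f$, and then simply invokes Theorem~\ref{thm.ex}.

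First I would read off the parameters. Comparing \eqref{1.SKT} with \eqref{1.Aij} gives $\alpha_{11}=a_{10}$, $\beta_{11}=2a_{11}$, $\gamma_{11}=a_{12}$, $\beta_{12}=a_{12}$, $\gamma_{21}=a_{21}$, $\alpha_{22}=a_{20}$, $\beta_{22}=a_{21}$, $\gamma_{22}=2a_{22}$, while $\alpha_{12}=\alpha_{21}=\beta_{21}=\gamma_{12}=0$ holds automatically, so \eqref{1.symm1} requires nothing extra. Then I would check that \eqref{1.symm2} is equivalent to \eqref{1.a}: the relation $\beta_{22}=\beta_{11}-\gamma_{21}$ reads $a_{21}=2a_{11}-a_{21}$, i.e.\ $a_{21}=a_{11}$; the relation $\gamma_{11}=\gamma_{22}-\beta_{12}$ reads $a_{12}=2a_{22}-a_{12}$, i.e.\ $a_{12}=a_{22}$; and $\gamma_{21}=\alpha_{22}-\alpha_{11}+\beta_{12}$ reads $a_{21}=a_{20}-a_{10}+a_{12}$, which, using the two previous identities, is exactly $a_{20}-a_{10}=a_{11}-a_{22}$.

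Next I would verify \eqref{1.cond}. We have $\alpha_{11}=a_{10}>0$, $\alpha_{22}=a_{20}>0$ by hypothesis, and $\alpha_{11}+\beta_{11}=a_{10}+2a_{11}\ge0$, $\alpha_{22}+\gamma_{22}=a_{20}+2a_{22}\ge0$ since all $a_{ij}\ge0$. The point is that \eqref{1.a} forces $a_{11}-a_{22}=a_{20}-a_{10}\ge0$, so $\min\{\beta_{11},\gamma_{22}\}=\min\{2a_{11},2a_{22}\}=2a_{22}$, whence $\beta_{12}<\alpha_{11}+\min\{\beta_{11},\gamma_{22}\}$ becomes $a_{22}=a_{12}<a_{10}+2a_{22}$, i.e.\ $0<a_{10}+a_{22}$, which is true. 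For the source term I would write $f_i(u)=u_ig_i(u)$ with $g_i(u)=b_{i0}-b_{i1}u_1-b_{i2}u_2$, continuous on $\overline D$; on the hypotenuse $\{u_1+u_2=1\}$, substituting $u_2=1-u_1$, $g_1=(b_{10}-b_{12})+(b_{12}-b_{11})u_1$ is affine in $u_1\in[0,1]$ and, by \eqref{1.b}, nonpositive at both endpoints, hence $g_1\le0$ on the whole segment; likewise $g_2\le0$ there. By continuity $g_i\le0$ in $\{1-\eps<u_1+u_2<1\}$ for a suitable $\eps>0$, so the hypothesis of Theorem~\ref{thm.ex} on $f$ is met, and applying that theorem yields the claimed bounded nonnegative weak solution with $u(x,t)\in\overline D$ (i.e.\ $u_1,u_2\ge0$, $u_1+u_2\le1$) and the regularity \eqref{1.reg}.

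There is no deep obstacle here; the work is essentially bookkeeping, the only subtle spot being the last step. On the closed hypotenuse $g_i$ may vanish at a vertex, so the bare ``by continuity'' argument can be questioned, and I would instead record the uniform inequality $b_{i1}u_1+b_{i2}u_2\ge b_{i0}(u_1+u_2)$, a direct consequence of \eqref{1.b}, which gives $g_i(u)\le b_{i0}(1-u_1-u_2)$ for all $u\in D$; this is precisely the bound that keeps the term $-u_ig_i(u)\log(1-u_1-u_2)$ appearing in the verification of H3 bounded above (since $s\log(1/s)\le 1/e$ on $(0,1)$), even without a strict sign for $g_i$ near $\partial D$.
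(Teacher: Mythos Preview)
Your argument is essentially the paper's: match the SKT coefficients with those in \eqref{1.Aij}, verify that \eqref{1.a} is equivalent to \eqref{1.symm2} and that \eqref{1.cond} follows from nonnegativity together with $a_{10},a_{20}>0$ and $a_{20}\ge a_{10}$, then handle the source term through the estimate $g_i(u)\le b_{i0}-\min\{b_{i1},b_{i2}\}(u_1+u_2)$. Your closing remark is in fact a bit sharper than the paper's own argument, which tacitly needs strict inequality in \eqref{1.b} to obtain $\eps_i>0$; when equality is allowed you are right that the hypothesis ``$g_i\le 0$ near the hypotenuse'' of Theorem~\ref{thm.ex} can fail, and one must instead go back to Theorem~\ref{thm.Jue15} and bound the H3 term $-u_ig_i(u)\log(1-u_1-u_2)$ directly via $g_i\le b_{i0}(1-u_1-u_2)$, exactly as you indicate.
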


\begin{proof}
The corollary follows from Theorem \ref{thm.Jue15} and Theorem \ref{thm.ex}
by specifying the diffusivities according to \eqref{1.SKT}.
The requirement of the symmetry of $H(u)A(u)$ leads to the conditions
$a_{11}=a_{21}$, $a_{22}=a_{12}$, and $a_{20}-a_{10} = a_{11}-a_{22}$, whereas 
\eqref{1.cond} becomes $a_{10}>0$, $a_{20}>0$, and 
$-a_{12}< a_{10}+2\min\{a_{20}-a_{10},0\}$. Taking into account that 
$a_{10}\le a_{20}$, the last condition is equivalent to $-a_{12}< a_{10}$, 
and this inequality holds since $a_{10}$ is positive.
Finally, Hypothesis H3 follows from the inequality 
$g_i(u)=b_{i0}-b_{i1}u_1-b_{i2}u_2\le b_{i0}-\min\{b_{i1},b_{i2}\}(u_1+u_2)\le 0$ 
for $1-\eps<u_1+u_2<1$, where $\eps=\min\{\eps_1,\eps_2\}$ and
$\eps_i=1-b_{i0}/\min\{b_{i1},b_{i2}\}\in(0,1)$.
\end{proof}


\section{Proof of Theorem \ref{thm.time}}\label{sec.time}

First, we observe that condition \eqref{1.bij} is a special case of the 
weak competition condition \eqref{1.wcc} which implies that
$U_1>0$ and $U_2>0$. It holds that $U_1+U_2<1$ since otherwise, the
assumption $U_1+U_2\ge 1$ leads in view of condition \eqref{1.bij} to
$$
  0 = f_1(U) = (b_{10}-b_{11}U_1-b_{12}U_2)U_1
	< (b_{10}-b_{12}U_1-b_{12}U_2)U_1 \le b_{10}-b_{12} = 0,
$$
which is a contradiction. Thus, $U\in D$. Furthermore, 
the identity $b_{i0}=b_{i1}U_1+b_{i2}U_2$ allows us to we rewrite $f_i(u)$ as
\begin{equation}\label{3.f1}
  f_i(u) = -u_i\sum_{j=1}^2 b_{ij}(u_j-U_j), \quad i=1,2,
\end{equation}
and the additional condition \eqref{1.bij} leads to
$$
  f_i(u) = -b_{i0}u_iU_3\bigg(\frac{u_i}{U_i}-\frac{u_3}{U_3}\bigg),
	\quad\mbox{where }U_3:=1-U_1-U_2.
$$
For later use, we observe that the entropy density \eqref{a.h} satisfies
\begin{equation}\label{3.fh}
  f(u)\cdot h'(u|U) = -\sum_{i=1}^2 b_{i0}u_iU_3
	\bigg(\frac{u_i}{U_i}-\frac{u_3}{U_3}\bigg)
  \bigg(\log\frac{u_i}{U_i}-\log\frac{u_3}{U_3}\bigg) \le 0
\end{equation}
for all $u\in D$, and we conclude from Theorem \ref{thm.Jue15} that 
$t\mapsto{\mathcal H}[u(t)|U]:=\int_\Omega h(u(x,t)|U)dx$ is nonincreasing.

For the positivity and large-time behavior, we need another functional.
Define
\begin{align*}
  \Phi_\eps(u|U) &= \int_\Omega \phi_\eps(u|U)dx, \quad\mbox{where} \\
  \phi_\eps(u|U) &= \sum_{i=1}^2\bigg(u_i-U_i-(U_i+\eps)
	\log\frac{u_i+\eps}{U_i+\eps}\bigg), \quad u\in D.
\end{align*}
We will show that $\Phi_\eps(u|U)$ is an entropy for \eqref{1.eq}-\eqref{1.bic}.
For this, let $K=\phi_\eps''(u|U)$ be the Hessian of $\phi_\eps$ with
respect to $u$. Because of the $\eps$-regularization, $\phi'_\eps(u|U)$
is an admissible test function for \eqref{1.eq}:
\begin{equation}\label{3.Phi}
  \Phi_\eps(u(t)|U) + \int_0^t\int_\Omega\na u:KA(u)\na udxds
	= \Phi_\eps(u^0|U) + \int_0^t\int_\Omega f(u)\cdot\phi'_\eps(u|U)dxds.
\end{equation}
First, we estimate the last term on the right-hand side. We infer from
\eqref{3.f1} and $\pa_{u_i}\phi_\eps(u|U)=(u_i-U_i)/(u_i+\eps)$ that
\begin{align*}
  \int_0^t\int_\Omega f(u)\cdot\phi'_\eps(u|U)dxds
	&= -\sum_{i,j=1}^2\int_0^t\int_\Omega b_{ij}(u_i-U_i)(u_j-U_j)dxds \\
	&\phantom{xx}{}+ \eps\sum_{i,j=1}^2\int_0^t\int_\Omega\frac{b_{ij}}{u_i+\eps}
	(u_i-U_i)(u_j-U_j)dxds.
\end{align*}
Since $(b_{ij})$ is positive definite and $u_i$ is bounded, there are
constants $c_b>0$ and $C>0$ such that
\begin{equation}\label{3.ineqf}
  \int_0^t\int_\Omega f(u)\cdot\phi'_\eps(u|U)dxds
	\le -c_b\int_0^t\|u-U\|_{L^2(\Omega)}^2ds 
	+ \eps C\sum_{i=1}^2\int_0^t\int_\Omega\frac{dxds}{u_i+\eps}.
\end{equation}
Next, the second term on the left-hand side of \eqref{3.Phi}
is estimated with the help of the following lemma.

\begin{lemma}\label{lem.KA}
There exists $\eps_0>0$ and $c_{KA}>0$ such that for all $0<\eps<\eps_0$
and all $u\in D$, $z=(z_1,z_2)\in\R^2$,
$$
  z^\top KA(u)z \ge c_{KA}\sum_{i=1}^2\frac{z_i^2}{(u_i+\eps)^2}.
$$
\end{lemma}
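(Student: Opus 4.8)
The plan is to reduce the asserted weighted coercivity of $KA(u)$ to an elementary positivity statement for a quadratic polynomial on the closed triangle $\overline D$, and then to remove the $\eps$-regularization by a compactness/perturbation argument. Since $\phi_\eps(u|U)$ is a sum of a function of $u_1$ and a function of $u_2$, its Hessian $K=\phi_\eps''(u|U)$ is diagonal,
$$
  K=\begin{pmatrix} (U_1+\eps)(u_1+\eps)^{-2} & 0 \\ 0 & (U_2+\eps)(u_2+\eps)^{-2}\end{pmatrix},
$$
in particular positive definite. With $\Lambda_\eps=\mathrm{diag}\big((u_1+\eps)^{-2},(u_2+\eps)^{-2}\big)$, the claim is equivalent to the positive semidefiniteness on $D$ (for all small $\eps$) of the symmetric part of $KA(u)-c_{KA}\Lambda_\eps$. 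By Sylvester's criterion this amounts to (i) positivity of the two diagonal entries and (ii) nonnegativity of the determinant. Absorbing the weights via $z_i=(u_i+\eps)w_i$, condition (ii) becomes, in the limit $\eps\to0$, the polynomial inequality
$$
  P(u):=4U_1U_2\,A_{11}(u)A_{22}(u)-\big(U_1\beta_{12}u_2+U_2\gamma_{21}u_1\big)^2\ \ge\ 0\qquad\text{on }\overline D,
$$
and I will in fact establish the strict inequality $P>0$ on $\overline D$, dealing with the $\eps$-corrections afterwards.

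\emph{Diagonal entries.} The $(i,i)$ entry of $KA$ equals $(U_i+\eps)A_{ii}/(u_i+\eps)^2$, so it suffices to show $A_{11},A_{22}>0$ on $\overline D$. Both are affine in $u$, hence attain their minimum at a vertex of $D$; using the symmetry relations \eqref{1.symm2}, the relevant vertex values are $\alpha_{11},\alpha_{22}$ and $\alpha_{11}+\beta_{11},\ \alpha_{22}+\gamma_{22},\ \alpha_{11}+\beta_{11}-\beta_{12}=\alpha_{22}+\gamma_{22}-\gamma_{21}$. Now \eqref{1.cond} gives $\alpha_{11},\alpha_{22}>0$ and $\alpha_{11}+\beta_{11}\ge0$, $\alpha_{22}+\gamma_{22}\ge0$, while \eqref{1.abc} forces the products $(\alpha_{11}+\beta_{11})(\alpha_{11}+\beta_{11}-\beta_{12})$ and $(\alpha_{22}+\gamma_{22})(\alpha_{22}+\gamma_{22}-\gamma_{21})$ to be strictly positive, so each factor is strictly positive. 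Hence $A_{11},A_{22}\ge\kappa>0$ on $\overline D$, and the diagonal entries of $KA$ are bounded below by $U_i\kappa(u_i+\eps)^{-2}$ for $\eps$ small.

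\emph{The polynomial inequality.} At the vertices of $D$ one computes, using \eqref{1.symm2}, that $P(0,0)=4U_1U_2\alpha_{11}\alpha_{22}$, $P(1,0)=4U_1U_2(\alpha_{11}+\beta_{11})(\alpha_{11}+\beta_{11}-\beta_{12})-U_2^2\gamma_{21}^2$, and $P(0,1)=4U_1U_2(\alpha_{22}+\gamma_{22})(\alpha_{22}+\gamma_{22}-\gamma_{21})-U_1^2\beta_{12}^2$; these are positive exactly by $\alpha_{11},\alpha_{22}>0$ and by \eqref{1.abc}. To propagate positivity to all of $\overline D$, note that $P$ is quadratic, so $\mathrm{Hess}(P)$ is a constant symmetric matrix; since $\mathrm{Hess}(A_{11}A_{22})$ has nonpositive determinant and $\mathrm{Hess}(P)=4U_1U_2\,\mathrm{Hess}(A_{11}A_{22})-2vv^\top$ with $v=(U_2\gamma_{21},U_1\beta_{12})^\top$, subtracting the rank-one positive semidefinite term only lowers the eigenvalues, so $\mathrm{Hess}(P)$ has a nonpositive eigenvalue. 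Arguing exactly as in Step 3 of the proof of Lemma \ref{lem.HA2} (moving along the corresponding eigendirection inside $D$, where $P$ is concave), one gets $\min_{\overline D}P=\min_{\partial D}P$. On each edge $P$ restricts to a one-variable quadratic, whose positivity follows from the vertex values above together with the bound $\det A(u)\ge0$ on $\overline D$ proved in Lemma \ref{lem.HA2}; for instance on the hypotenuse one has the identity $P(u_1,1-u_1)=4U_1U_2\det A(u_1,1-u_1)-\big(U_1\beta_{12}(1-u_1)-U_2\gamma_{21}u_1\big)^2$. This gives $P\ge2\delta>0$ on the compact set $\overline D$.

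\emph{Removing the $\eps$-regularization, and the main difficulty.} For $\eps>0$ the (rescaled) off-diagonal entry of $KA$ involves the factors $u_i(u_j+\eps)/(u_i+\eps)\in[0,u_j+\eps]$, which converge to $u_j$ uniformly on $\overline D$; hence the $\eps$-determinant differs from $P(u)$ times a positive factor by $O(\eps)$ uniformly, so for $\eps$ small it stays $\ge\delta>0$, and combining with the diagonal lower bound yields the claim with $c_{KA}$ of order $\delta$. The main obstacle is the previous step: positivity of $P$ at the three vertices — which is precisely condition \eqref{1.abc} — does \emph{not} by itself imply positivity on the whole closed triangle (a quadratic can be positive at the vertices of a triangle and negative inside), so one genuinely needs the concavity direction of $P$ and a careful edgewise one-dimensional analysis exploiting \eqref{1.symm2} and $\det A\ge0$. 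A secondary technical point is that near the faces $\{u_i=0\}$ the rescaling $z_i=(u_i+\eps)w_i$ degenerates as $\eps\to0$, so the $\eps$-uniformity must be obtained through the one-sided estimate $u_i(u_j+\eps)/(u_i+\eps)\le u_j+\eps$ rather than from naive continuity in $\eps$.
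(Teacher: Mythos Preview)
Your overall strategy is sound and differs from the paper's: you reduce the claim to strict positivity on $\overline D$ of the polynomial
$$
  P(u)=4U_1U_2\,A_{11}(u)A_{22}(u)-\big(U_1\beta_{12}u_2+U_2\gamma_{21}u_1\big)^2,
$$
then use a concavity-direction argument (as in Step~3 of Lemma~\ref{lem.HA2}) to push the minimum to $\partial D$. The diagonal bound $A_{11},A_{22}\ge\kappa>0$, the vertex computations, and the $\eps$-perturbation step are all correct.

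The genuine gap is the edge analysis. You assert that positivity of $P$ on each edge ``follows from the vertex values together with the bound $\det A\ge 0$,'' citing as evidence the hypotenuse identity
$$
  P(u_1,1-u_1)=4U_1U_2\det A(u_1,1-u_1)-\big(U_1\beta_{12}(1-u_1)-U_2\gamma_{21}u_1\big)^2.
$$
But this identity yields an \emph{upper} bound $P\le 4U_1U_2\det A$, not a lower bound, so $\det A\ge 0$ says nothing about $P>0$. More generally, substituting $A_{11}A_{22}=\det A+\beta_{12}\gamma_{21}u_1u_2$ and using $\det A\ge 0$ gives only $P\ge -(U_1\beta_{12}u_2-U_2\gamma_{21}u_1)^2\le 0$, which is useless. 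Thus the edge positivity is unproved; a one-variable quadratic that is positive at both endpoints of $[0,1]$ can certainly dip below zero in the interior, and nothing in your argument rules this out. You yourself flag this step as ``the main obstacle,'' but the ingredients you list do not close it.

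The paper avoids this difficulty altogether by a barycentric decomposition rather than a Sylvester-type analysis of $P$. Writing $A(u)=\sum_{k=1}^3 u_k A^{(k)}$ and substituting $w_i=z_i\sqrt{U_i+\eps}/(u_i+\eps)$, the quadratic form becomes $\sum_{k=1}^3 u_k I_k+O(\eps)$, where each $I_k$ is a constant-coefficient quadratic form in $(|w_1|,|w_2|)$. Assumption \eqref{1.abc} is precisely what makes $I_1$ and $I_2$ positive definite, and $\alpha_{11},\alpha_{22}>0$ handles $I_3$; since $u_1+u_2+u_3=1$, one obtains $\sum_k u_k I_k\ge 2\delta(w_1^2+w_2^2)$ immediately, with no two-dimensional polynomial analysis on $\overline D$ required. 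If you wish to repair your route, the quickest fix is to apply this same splitting restricted to each edge (where one $u_k$ vanishes and only two $I_k$ contribute), but at that point you are essentially reproducing the paper's proof.
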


\begin{proof}
The matrix coefficients of $K$ are explicitly given by 
$K_{ij}=(U_i+\eps)\delta_{ij}/(u_i+\eps)^2$. In order to estimate the
product $z^\top KA(u)z$, we rewrite the coefficients of the diffusion matrix as
$A_{ij}(u)=\sum_{k=1}^3 a_{ij}^{(k)}u_k$, where
$$
  a_{ij}^{(1)} = \alpha_{ij}+\beta_{ij}, \quad
	a_{ij}^{(2)} = \alpha_{ij}+\gamma_{ij}, \quad
	a_{ij}^{(3)} = \alpha_{ij}, \quad i,j=1,2.
$$
Then we need to treat the quadratic form
\begin{align*}
  z^\top & KA(u)z
	= \sum_{k=1}^3 u_k\sum_{i,j=1}^2\frac{U_i+\eps}{(u_i+\eps)^2}a_{ij}^{(k)}z_iz_j \\
	&= \sum_{k=1}^3 u_k\bigg(a_{11}^{(k)}w_1^2
	+ \bigg(a_{12}^{(k)}\sqrt{\frac{U_1+\eps}{U_2+\eps}}\frac{u_2+\eps}{u_1+\eps}
	+ a_{21}^{(k)}\sqrt{\frac{U_2+\eps}{U_1+\eps}}\frac{u_1+\eps}{u_2+\eps}\bigg)
	w_1w_2 + a_{22}^{(k)}w_2^2\bigg),
\end{align*}
where $w_i=z_i\sqrt{U_i+\eps}/(u_i+\eps)$, $i=1,2$.
Because of condition \eqref{1.symm1}, 
$a_{12}^{(2)}=a_{12}^{(3)}=a_{21}^{(1)}=a_{21}^{(3)}=0$, and so, the quadratic form
simplifies to
\begin{align}
  z^\top KA(u)z
	&= \sum_{k=1}^3 u_k\big(a_{11}^{(k)}w_1^2 + a_{22}^{(k)}w_2^2\big) \nonumber \\
	&\phantom{xx}{}
	+ \bigg(a_{12}^{(1)}\sqrt{\frac{U_1+\eps}{U_2+\eps}}\frac{u_1(u_2+\eps)}{u_1+\eps}
	+ a_{21}^{(2)}\sqrt{\frac{U_2+\eps}{U_1+\eps}}\frac{u_2(u_1+\eps)}{u_2+\eps}\bigg)
	w_1w_2 \nonumber \\
	&\ge \sum_{k=1}^3 u_k\big(a_{11}^{(k)}w_1^2 + a_{22}^{(k)}w_2^2\big) 
	\nonumber \\
	&\phantom{xx}{}-\bigg(|a_{12}^{(1)}|\sqrt{\frac{U_1+\eps}{U_2+\eps}}(u_2+\eps)
	+ |a_{21}^{(2)}|\sqrt{\frac{U_2+\eps}{U_1+\eps}}(u_1+\eps)\bigg)|w_1||w_2| 
	\nonumber \\
	&= \sum_{k=1}^3 u_k I_k - \eps\bigg(|a_{12}^{(1)}|\sqrt{\frac{U_1+\eps}{U_2+\eps}}
	+ |a_{21}^{(2)}|\sqrt{\frac{U_2+\eps}{U_1+\eps}}\bigg)|w_1||w_2|, \label{3.aux}
\end{align}
where 
\begin{align*}
  I_1 &= a_{11}^{(1)}w_1^2 + a_{22}^{(1)}w_2^2 
	- |a_{21}^{(2)}|\sqrt{\frac{U_2+\eps}{U_1+\eps}}|w_1||w_2|, \\
	I_2 &= a_{11}^{(2)}w_1^2 + a_{22}^{(2)}w_2^2 
	- |a_{12}^{(1)}|\sqrt{\frac{U_1+\eps}{U_2+\eps}}|w_1||w_2|, \\
	I_2 &= a_{11}^{(3)}w_1^2 + a_{22}^{(3)}w_2^2. 
\end{align*}

Condition \eqref{1.cond} shows that $a_{ii}^{(3)}>0$,
$a_{ii}^{(i)}\ge 0$ for $i=1,2$, and conditions \eqref{1.symm1} and \eqref{1.symm2}
lead to
\begin{align*}
  a_{11}^{(1)}a_{22}^{(1)} - 4\frac{U_2}{U_1}|a_{21}^{(2)}|^2
	&= (\alpha_{11}+\beta_{11})(\alpha_{22}+\beta_{22})
	- 4\frac{U_2}{U_1}(\alpha_{21}+\gamma_{21})^2 \\
	&= (\alpha_{11}+\beta_{11})(\alpha_{11}+\beta_{11}-\beta_{12})
	- 4\frac{U_2}{U_1}\gamma_{21}^2 > 0, \\
	a_{11}^{(2)}a_{22}^{(2)} - 4\frac{U_1}{U_2}|a_{12}^{(1)}|^2
  &= (\alpha_{11}+\gamma_{11})(\alpha_{22}+\gamma_{22})
	- 4\frac{U_1}{U_2}(\alpha_{12}+\beta_{12})^2 \\
	&= (\alpha_{22}+\gamma_{22}-\gamma_{21})(\alpha_{22}+\gamma_{22})
	- 4\frac{U_1}{U_2}\beta_{12}^2 > 0,
\end{align*}
and the positivity of the discriminants
follows from assumption \eqref{1.abc}. As
$\sqrt{(U_i+\eps/(U_j+\eps)}$ is an $\eps$-per\-tur\-ba\-tion of $\sqrt{U_i/U_j}$,
there exist $\delta>0$ and $C>0$ such that, for sufficiently small $\eps>0$,
$$
  I_k \ge 2\delta(w_1^2+w_2^2) - \eps C|w_1||w_2| \ge \delta(w_1^2+w_2^2).
$$
Therefore, still for sufficiently small $\eps>0$, \eqref{3.aux} yields
$$
  z^\top KA(u)z \ge \frac{\delta}{2}(w_1^2+w_2^2)
	= \frac{\delta}{2}\bigg(\frac{U_1+\eps}{(u_1+\eps)^2}z_1^2 
	+ \frac{U_2+\eps}{(u_2+\eps)^2}z_2^2\bigg).
$$
Since $U_1>0$, $U_2>0$, the conclusion follows with $c_{KA}=\delta\min\{U_1,U_2\}/2$.
\end{proof}

We proceed with the proof of Theorem \ref{thm.time}. Employing
Lemma \ref{lem.KA} and estimate \eqref{3.ineqf} in the entropy inequality
\eqref{3.Phi}, it follows that 
\begin{align}
  \Phi_\eps & (u(t)|U) + c_b\int_0^t\|u(s)-U\|_{L^2(\Omega)}ds
	+ c_{KA}\sum_{i=1}^2\int_0^t\int_\Omega\frac{|\na u_i|^2}{(u_i+\eps)^2}dxds 
	\nonumber \\
	&\le \Phi_\eps(u^0|U) + \eps C\sum_{i=1}^2\int_0^t\int_\Omega\frac{dxds}{u_i+\eps}.
	\label{3.aux2}
\end{align}

We wish to pass to the limit $\eps\to 0$. First, we focus on the integral on
the right-hand side of \eqref{3.aux2}:
$$
  \eps\sum_{i=1}^2\int_0^t\int_\Omega\frac{dxds}{u_i+\eps}
	= \sum_{i=1}^2\int_0^t\int_{\{u_i>0\}}\frac{\eps}{u_i+\eps}dxds
	+ \sum_{i=1}^2\int_0^t\mbox{meas}(\{x:u_i(x,s)=0\})ds.
$$
By dominated convergence, we have
$$
   \lim_{\eps\to 0}\sum_{i=1}^2\int_0^t\int_{\{u_i>0\}}\frac{\eps}{u_i+\eps}dxds = 0.
$$
Thus, performing the limit inferior $\eps\to 0$ in \eqref{3.aux2} 
and applying Fatou's lemma, we obtain
\begin{align}
  \Phi & (u(t)|U) + c_b\int_0^t\|u(s)-U\|_{L^2(\Omega)}ds
	+ c_{KA}\sum_{i=1}^2\int_0^t\int_\Omega\frac{|\na u_i|^2}{u_i^2}dxds 
	\nonumber \\
	&\le \Phi(u^0|U) + C\sum_{i=1}^2\int_0^t\mbox{meas}(\{x:u_i(x,s)=0\})ds,
	\label{3.aux3}
\end{align}
where
$$
  \Phi(u|U) = \lim_{\eps\to 0}\Phi_\eps(u|U)
	= \sum_{i=1}^2\int_\Omega U_i\bigg(\frac{u_i}{U_i}-1-\log\frac{u_i}{U_i}\bigg)dx.
$$

If $\mbox{meas}(\{x:u_i(x,t)=0\})>0$ for some $t>0$ and some $i\in\{1,2\}$
then $\Phi(u(t)|U)=+\infty$, which contradicts \eqref{3.aux3}. Thus,
$\mbox{meas}(\{x:u_i(x,t)=0\})=0$ for all $t>0$ and $i=1,2$. This means that
$u_i(x,t)>0$ for a.e.\ $x\in\Omega$, $t>0$, which shows the first property
stated in the theorem.
It follows from \eqref{3.aux3} that $u_i-U_i$, $\na\log u_i
\in L^2(0,\infty;L^2(\Omega))$. In particular, \eqref{3.aux3} implies that
$$
  \int_0^\infty\|u(s)-U\|_{L^2(\Omega)}^2ds < \infty.
$$
Hence, there exists a sequence $t_n\to\infty$ such that $u(t_n)\to U$
strongly in $L^2(\Omega)$ as $n\to\infty$. In view of \eqref{3.fh} and
Theorem \ref{thm.Jue15}, the mapping $t\mapsto{\mathcal H}[u(t)|U]$ 
is nonincreasing. Since $h(u(t_n)|U)\to 0$ as $n\to\infty$, the dominated
convergence theorem and the continuity of $h$ in $\overline D$
(see \eqref{a.h}), we infer that ${\mathcal H}[u(t_n)|U]\to 0$ as $n\to\infty$.
Then the monotonicity of $t\mapsto\mathcal H[u(t)|U]$ implies that 
this convergence holds
for any sequence and ${\mathcal H}[u(t)|U]\to 0$ as $t\to\infty$.
This finishes the proof of Theorem \ref{thm.time}.


\section{Proof of Theorem \ref{thm.uni}}\label{sec.uni}

Set $\rho=u_1+u_2$ and $\sigma=u_1-u_2$. A straightforward computation shows
that, thanks to assumptions \eqref{1.symm1}-\eqref{1.symm2} and \eqref{1.condu},
$\rho$ and $\sigma$ solve
\begin{align}
  \pa_t\rho &= \Delta F(\rho), \ t>0, \quad \na\rho\cdot\nu=0\
	\mbox{on }\pa\Omega, \quad \rho(0)=u_1^0+u_2^0\ \mbox{in }\Omega, \label{4.rho} \\
	\pa_t\sigma &= \diver\big(d(\rho)\na\sigma + \sigma\na V(\rho)\big), \ t>0,
	\quad \na\sigma\cdot\nu=0\ \mbox{on }\pa\Omega, \quad 
	\sigma(0)=u_1^0-u_2^0\ \mbox{in }\Omega, \label{4.sigma}
\end{align}
where 
$$
  F(\rho) = \left\{\begin{array}{ll}
	(\alpha_{11}+\beta_{11}\rho)^2/(2\beta_{11}) &\mbox{if }\beta_{11}\neq 0, \\
  \alpha_{11}\rho &\mbox{if } \beta_{11}=0,
	\end{array}\right., \quad
  d(\rho)=\alpha_{11}+(\beta_{11}-\beta_{12})\rho, 
$$
and $V(\rho)=\beta_{12}\rho$.
Observe that, by assumption \eqref{1.cond}, $\alpha_{11}+\beta_{11}-\beta_{12}>0$
and hence, together with $\rho=u_1+u_2\le 1$, it holds that $d(\rho)>0$. Clearly,
the bounded weak solution $u=(u_1,u_2)$ to \eqref{1.eq}-\eqref{1.bic} is unique
if and only if the weak solution $(\rho,\sigma)$ to \eqref{4.rho}-\eqref{4.sigma} is 
unique. First, we prove that \eqref{4.rho} possesses at most one weak solution.
Then the uniqueness result is shown for \eqref{4.sigma}.

The function $F$ is nondecreasing since $\beta_{11}>0$. Thus, by the $H^{-1}$
method, the solution to \eqref{4.rho} is unique. Indeed, if $\rho_1$, $\rho_2$
are two weak solutions to \eqref{4.rho}, their difference satisfies
\begin{equation}\label{4.diff}
  \pa_t(\rho_1-\rho_2)=\Delta(F(\rho_1)-F(\rho_2))\quad\mbox{in }\Omega. 
\end{equation}
Let $w(t)$ be the weak solution to the dual problem
$$
  -\Delta w(t)=\rho_1(t)-\rho_2(t)\quad\mbox{in }\Omega, \quad
	\na w(t)\cdot\nu=0\quad\mbox{on }\pa\Omega, \ t>0.
$$
Then $w\in L^2(0,T;H^1(\Omega))$ and using this function as a test function 
in the weak formulation of \eqref{4.diff}:
\begin{align*}
  0 &= \langle\pa_t(\rho_1-\rho_2),w\rangle 
	+ \int_\Omega\na(F(\rho_1)-F(\rho_2))\cdot\na w dx \\
	&= -\langle\pa_t\Delta w,w\rangle - \int_\Omega(F(\rho_1)-F(\rho_2))\Delta w dx \\
	&= \frac12\frac{d}{dt}\int_\Omega|\na w|^2 dx 
	+ \int_\Omega(F(\rho_1)-F(\rho_2))(\rho_1-\rho_2)dx.
\end{align*}
By the monotonicity of $F$, the last integral is nonnegative, so
$\int_\Omega|\na w(t)|^2 dx$ is nonincreasing in time. But
$\int_\Omega|\na w(0)|^2 dx = 0$, and therefore $w(t)=0$ which implies
that $\rho_1(t)=\rho_2(t)$ for $t>0$.

Next, we consider \eqref{4.sigma} with $\rho$ being a given function.
Let $\sigma_1$, $\sigma_2$ be two weak solutions to \eqref{4.sigma}.
As in \cite{Gaj94}, we introduce the semimetric
$$
  \Xi[\sigma_1,\sigma_2] = S[\sigma_1]+S[\sigma_2]
	-2S\bigg[\frac{\sigma_1+\sigma_2}{2}\bigg], \quad
	S[\sigma]=\int_\Omega \sigma\log\sigma dx.
$$
Because of the strict convexity of $\sigma\mapsto\sigma\log\sigma$, it holds that
$\Xi[\sigma_1,\sigma_2]\ge 0$ and $\Xi[\sigma_1,\sigma_2]=0$ if and only
if $\sigma_1=\sigma_2$. 
Computing the time derivative of $\Xi[\sigma_1,\sigma_2]$, we see that the
drift terms cancel and we end up with
$$
  \frac{d}{dt}\Xi[\sigma_1,\sigma_2]
	= -4\int_\Omega d(\rho)\big(|\na\sqrt{\sigma_1}|^2 + |\na\sqrt{\sigma_2}|^2
	- |\na\sqrt{\sigma_1+\sigma_2}|^2\big)dx.
$$
It was shown in, for instance, \cite[Lemma 10]{ZaJu15} that the integral
is nonnegative (since the Fisher information $\int_\Omega 
d(\rho)|\na\sqrt{\sigma_i}|^2dx$
is subadditive). We infer that $\Xi[\sigma_1(t),\sigma_2(t)]\le
\Xi[\sigma_1(0),\sigma_2(0)]$ for $t>0$. As $\sigma_1$ and $\sigma_2$ have
the same initial data, $\Xi[\sigma_1(0),\sigma_2(0)]=0$ and consequently,
$\Xi[\sigma_1(t),\sigma_2(t)]=0$ for $t>0$. 
Since $\Xi$ is a semimetric, we infer that $\sigma_1(t)=\sigma_2(t)$ for $t>0$,
finishing the proof.


\begin{appendix}
\section{Necessary conditions for positive semidefiniteness}

We show that conditions \eqref{1.symm1} and a part of conditions \eqref{1.symm2}
are necessary to apply the boundedness-by-entropy method. 
More precisely, we prove the following result.

\begin{lemma}[Necessary conditions]\label{lem.nece}
We define $h(u)=\sum_{k=1}^3\phi_k(u_k)$, where $u=(u_1,u_2)$, $u_3=1-u_1-u_2$,
and $\phi_k\in C^2(0,1)$ are convex functions satisfying
$\lim_{s\to 0+}\phi_k''(s)=\infty$ for $k=1,2,3$. Let $H=h''(u)\in\R^{2\times 2}$
be the Hessian of $h(u)$ and let $A(u)$ be given by \eqref{1.Aij}. 
If $HA(u)$ is positive semidefinite then
\begin{align}
  & \alpha_{12}=\alpha_{21}=\beta_{21}=\gamma_{12}=0, \label{app.c1} \\
	& \beta_{12} = \alpha_{11}-\alpha_{22}+\beta_{11}-\beta_{22}, \quad
	\gamma_{21} = \alpha_{22}-\alpha_{11}+\gamma_{22}-\gamma_{11}. \label{app.c2}
\end{align}
\end{lemma}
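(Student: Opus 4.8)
The plan is to exploit the explicit form of the Hessian $H=h''(u)$ together with the blow-up $\lim_{s\to 0+}\phi_k''(s)=\infty$ in order to decouple $\phi_1''$, $\phi_2''$, $\phi_3''$ by probing the quadratic form $z^\top H(u)A(u)z$ close to each of the three edges of the triangle $D$. With $u_3=1-u_1-u_2$ and the shorthand $\phi_k''=\phi_k''(u_k)$, a short computation gives
$$
  H=\begin{pmatrix}\phi_1''+\phi_3'' & \phi_3''\\ \phi_3'' & \phi_2''+\phi_3''\end{pmatrix},
$$
and, expanding the bilinear form, for every $z=(z_1,z_2)^\top\in\R^2$,
$$
  z^\top H(u)A(u)z=\phi_1''\,a_1(z,u)+\phi_2''\,a_2(z,u)+\phi_3''\,a_3(z,u),
$$
where $A_{ij}=A_{ij}(u)$ and
$$
  a_1=z_1\big(A_{11}z_1+A_{12}z_2\big),\quad a_2=z_2\big(A_{21}z_1+A_{22}z_2\big),\quad a_3=(z_1+z_2)\big((A_{11}+A_{21})z_1+(A_{12}+A_{22})z_2\big).
$$
The key observation is that each $a_i(\cdot,u)$ is a product of two real linear forms in $z$, and such a quadratic form is nonnegative on all of $\R^2$ only if one of the two forms is a nonnegative scalar multiple of the other.

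Next I would run the limiting argument along the three edges of $D$. Fix $z$ and $a\in(0,1)$ and let $u_1\to 0+$ with $u_2=a$. Then $\phi_1''(u_1)\to+\infty$ (convexity gives $\phi_k''\ge0$), while $\phi_2''(a)$ and $\phi_3''(1-u_1-a)\to\phi_3''(1-a)$ stay finite, so the hypothesis $z^\top H(u)A(u)z\ge0$ forces $\lim_{u_1\to 0+}a_1(z,u)=z_1\big(A_{11}(0,a)z_1+A_{12}(0,a)z_2\big)\ge0$ for all $z$. By the observation above (with one factor equal to $z_1$) this forces $A_{12}(0,a)=0$; since $a\in(0,1)$ is arbitrary and $A_{12}(0,u_2)=\alpha_{12}+\gamma_{12}u_2$ is affine, $\alpha_{12}=\gamma_{12}=0$. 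The same reasoning along $\{u_2=0\}$, where $\phi_2''\to+\infty$, gives $A_{21}(u_1,0)=\alpha_{21}+\beta_{21}u_1\equiv0$, hence $\alpha_{21}=\beta_{21}=0$; this establishes \eqref{app.c1}. Finally, along $\{u_1+u_2=1\}$ (parametrized by $u_1\in(0,1)$, with $u_3=1-u_1-u_2\to 0+$) we have $\phi_3''(u_3)\to+\infty$ while $\phi_1''(u_1)$ and $\phi_2''(1-u_1)$ remain finite, so $z\mapsto a_3(z,u)$ must be nonnegative on the edge; since $a_3$ is the product of $z_1+z_2$ with $(A_{11}+A_{21})z_1+(A_{12}+A_{22})z_2$, this forces $(A_{11}+A_{21})(u)=(A_{12}+A_{22})(u)$ for all $u$ on the edge. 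Both sides are affine, so restricting to $u_2=1-u_1$ and comparing the constant term and the coefficient of $u_1$, and then using \eqref{app.c1} to solve the two resulting relations for $\gamma_{21}$ and $\beta_{12}$, yields exactly \eqref{app.c2}.

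All the computations are elementary, and I expect the only point needing care to be the limiting step: one must check that along each chosen path exactly one of $\phi_1''$, $\phi_2''$, $\phi_3''$ tends to $+\infty$ while the other two stay bounded — which is why the edges must be approached away from the vertices of $D$ — so that for each fixed $z$ the complementary terms in $z^\top H(u)A(u)z$ remain bounded and the sign of the coefficient of the blowing-up factor is genuinely forced in the limit. The auxiliary fact that a product of two real linear forms on $\R^2$ is everywhere nonnegative precisely when the forms are proportional with a nonnegative factor is immediate by inspecting the sign pattern of the two forms in the sectors they cut out, and is the other ingredient.
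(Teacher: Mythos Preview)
Your proposal is correct and follows essentially the same route as the paper: both decompose $z^\top H(u)A(u)z$ as a sum of three pieces weighted by $\phi_1''$, $\phi_2''$, $\phi_3''$, send $u$ to each edge of $D$ (away from the vertices) so that exactly one $\phi_k''$ blows up, and read off the conditions from the resulting rank-one quadratic forms. The only cosmetic difference is that the paper packages the argument in matrix language (writing $H=\sum_k\phi_k''H^{(k)}$ and $A=\sum_\ell u_\ell A^{(\ell)}$ and concluding that $H^{(i)}A^{(j)}$ is positive semidefinite for $i\neq j$), whereas you work directly with the scalar forms $a_1,a_2,a_3$ and invoke the ``product of two linear forms'' observation explicitly; the content is the same.
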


Conditions \eqref{app.c1} correspond to \eqref{1.symm1} needed in
Theorem \ref{thm.ex}. If the coefficients fulfill conditions \eqref{1.symm2} from
Theorem \ref{thm.ex} then also \eqref{app.c2} holds. Functions which satisfy the
assumptions stated above are $\phi(s)=s\log s$, $\phi(s)=s-\log s$, and
$\phi(s)=s^b$ with $b<2$, $b\neq 1$.

\begin{proof}
We write $A(u)=\sum_{k=1}^3 u_kA^{(k)}$, where $A^{(k)}=(a_{ij}^{(k)})_{i,j=1,2}$
are constant matrices and
$$
  a_{ij}^{(1)} = \alpha_{ij}+\beta_{ij}, \quad
	a_{ij}^{(2)} = \alpha_{ij}+\gamma_{ij}, \quad
	a_{ij}^{(3)} = \alpha_{ij}.
$$
Furthermore, we formulate $H=\sum_{k=1}^3\phi''_k(u_k)H^{(k)}$, where
$$
  H^{(1)} = \begin{pmatrix} 1 & 0\\ 0 & 0 \end{pmatrix},\quad
  H^{(2)} = \begin{pmatrix} 0 & 0\\ 0 & 1 \end{pmatrix},\quad
  H^{(3)} = \begin{pmatrix} 1 & 1\\ 1 & 1 \end{pmatrix}.
$$
Then
$$
  HA(u) = \sum_{k,\ell=1}^3\phi''_k(u_k)u_\ell H^{(k)}A^{(\ell)}.
$$

The idea is to study the behavior of $HA(u)$ at the border of the triangle $D$.
We take $u_1=(1-\eps)s$, $u_2=(1-\eps)(1-s)$, and consequently $u_3=\eps$ for
some $\eps$, $s\in(0,1)$ in
$$
  \frac{1}{\phi_3''(u_3)}HA(u)
	= \sum_{\ell=1}^3 u_\ell H^{(3)}A^{(\ell)} + \frac{1}{\phi_3''(u_3)}
	\sum_{\ell=1}^3 u_\ell\big(\phi_1''(u_1)H^{(1)}+\phi_2''(u_2)H^{(2)}\big)A^{(\ell)}
$$
and pass to the limit $\eps\to 0$. By assumption, the left-hand side is a
positive semidefinite matrix. Moreover, since $\phi_3''(u_3)=\phi_3''(\eps)\to
\infty$ as $\eps\to 0$, the last sum on the right-hand side vanishes in the
limit. We deduce that
$$
  \lim_{\eps\to 0}\sum_{\ell=1}^3 u_\ell H^{(3)}A^{(\ell)}
	= H^{(3)}\big(sA^{(1)} + (1-s)A^{(2)}\big)
$$
is positive semidefinite for all $s\in(0,1)$, which implies that
$H^{(3)}A^{(1)}$ and $H^{(3)}A^{(2)}$ are positive semidefinite. By exchanging
the rule of $u_1$, $u_2$, $u_3$, a similar argument shows that
$H^{(i)}A^{(j)}$ is positive semidefinite for all $i=1,2,3$, $j\neq i$.
For any matrix $M=(m_{ij})_{i,j=1,2}$, we have
$$
  H^{(1)}M = \begin{pmatrix} m_{11} & m_{12} \\ 0 & 0 \end{pmatrix}, \
  H^{(2)}M = \begin{pmatrix} 0 & 0 \\ m_{21} & m_{22} \end{pmatrix}, \
	H^{(3)}M = \begin{pmatrix} 
	m_{11}+m_{21} & m_{12}+m_{12} \\ m_{11}+m_{21} & m_{12}+m_{22} 
	\end{pmatrix}.
$$
We verify that $H^{(i)}A^{(j)}$ is positive semidefinite for all $i=1,2,3$, $j\neq i$
if and only if \eqref{app.c1}-\eqref{app.c2} hold.
\end{proof}

\end{appendix}


\end{document}